\def\NAT@def@citea{\def\@citea{\NAT@separator}}
\theoremstyle{plain}
\newtheorem{theorem}{Theorem}[section]
\newtheorem{lemma}[theorem]{Lemma}
\newtheorem{remark1}{Remark}[section]
\theoremstyle{definition}
\theoremstyle{remark}
\begin{document}


\title{Optimal estimators in misspecified linear regression model with an application to real-world data} \break 

\author{
\name{Manickavasagar Kayanan\textsuperscript{a,b}$^{\ast}$\thanks{$^\ast$Corresponding author. Email: mgayanan@vau.jfn.ac.lk}
and Pushpakanthie Wijekoon\textsuperscript{c}}
\affil{\textsuperscript{a}Postgraduate Institute of Science, University of Peradeniya, Peradeniya, Sri Lanka; \textsuperscript{b}Deparment of Physical Science, Vavuniya Campus of the University of Jaffna, Vavuniya, Sri Lanka; \textsuperscript{c}Department of Statistics and Computer Science, University of Peradeniya, Peradeniya, Sri Lanka}
}

\maketitle
\begin{abstract}
In this article, we propose the Sample Information Optimal Estimator (SIOE) and the Stochastic Restricted Optimal Estimator (SROE) for misspecified linear regression model when multicollinearity exists among explanatory variables. Further, we obtain the superiority conditions of proposed estimators over some other existing estimators in the Mean Square Error Matrix (MSEM) criterion in a standard form which can apply to all estimators considered in this study. Finally, a real-world example and a Monte Carlo simulation study are presented for the proposed estimators to illustrate the theoretical results. \end{abstract}

\begin{keywords}
Sample Information Optimal Estimator; Stochastic Restricted Optimal Estimator; Mean Square Error Matrix
\end{keywords}


\section{Introduction}
The multiple linear regression model defined as
\begin{equation}\label{section}
  \bm y=\bm X\bm\beta+\bm\epsilon
\end{equation}
where $\bm y$ is the $n\times 1$ vector of observations on the predictor variable, $\bm X$ is the $n\times p$ matrix of observations on $p$ non stochastic regressor variables, $\bm \beta$ is a $p\times 1$ vectors of unknown parameters, $\bm \epsilon$ is the $n\times 1$ vector of disturbances, such that $E(\bm \epsilon)=0$ and $E(\bm \epsilon\bm \epsilon')=\bm \Omega=\sigma^2 \bm I$.

The estimator for $\bm \beta$ considered commonly in practical situations is the ordinary least squares estimator (OLSE)
\begin{equation}\label{section}
\hat{\bm \beta}_{OLSE}=(\bm X'\bm X)^{-1} \bm X' \bm y
\end{equation}which is unbiased and has the minimum variance among all linear unbiased estimators.

If the columns of the $\bm X$ matrix are nearly linearly dependent, i.e., multicollinear, then the matrix $\bm X'\bm X$  is almost singular. Consequently, the numerical computation of (2) will be unstable, and the variance of the OLSE will be large. As a remedial measure to the multicollinearity problem, biased estimators have been used in the literature. Some of the biased estimators are based only  on model (1), namely Ridge Estimator (RE) \cite{Ho70}, Almost Unbiased Ridge Estimator (AURE) \cite{Si86}, Liu Estimator (LE) \cite{Lu93}, Almost Unbiased Liu Estimator (AULE) \cite{Ak95}, Principal Component Regression Estimator (PCRE) \cite{Ma65},  r-k class estimator \cite{Ba84} and r-d class estimator \cite{Ka01}, and are given as
\begin{align}\label{section}
\hat{\bm \beta}_{RE}=&(\bm X'\bm X+k\bm I)^{-1} \bm X'\bm X\hat{\bm \beta}_{OLSE}\\
\hat{\bm \beta}_{AURE}=&\left(\bm I-k^{2}(\bm X'\bm X+k\bm I)^{-2}\right)\hat{\bm \beta}_{OLSE}\\
\hat{\bm \beta}_{LE}=&(\bm X'\bm X+\bm I)^{-1}(\bm X'\bm X+d\bm I)\hat{\bm \beta}_{OLSE}\\
\hat{\bm \beta}_{AULE}=&\left(\bm I-(1-d)^{2}(\bm X'\bm X+\bm I)^{-2}\right)\hat{\bm \beta}_{OLSE}\\
\hat{\bm \beta}_{PCRE}=& \bm T_{h}\bm T_{h}'\hat{\bm \beta}_{OLSE}\\
\hat{\bm \beta}_{rk}=&\bm T_{h}\bm T_{h}'(\bm X'\bm X+k\bm I)^{-1} \bm X'\bm X\hat{\bm \beta}_{OLSE}\\
\hat{\bm \beta}_{rd}=&\bm T_{h}\bm T_{h}'(\bm X'\bm X+\bm I)^{-1}(\bm X'\bm X+d\bm I)\hat{\bm \beta}_{OLSE}					
\end{align}respectively, where $k>0$ and  $0<d<1$ are the shrinkage parameters, and $\bm T_{h}=(t_{1},t_{2},...,t_{h})$ is the first $h$ columns of the standardized eigenvectors of $\bm X'\bm X$ represents by $T=(t_{1},t_{2},...,t_{h},...,t_{m})$.

According to the literature, some other biased estimators are also available based on model (1) and prior information about $\bm \beta$ in the form of exact linear restrictions or stochastic linear restrictions. Theil and Goldberger \cite{th61} have presented the stochastic linear restrictions on $\bm \beta$ as
\begin{equation}\label{section}
\bm r=\bm R \bm\beta+\bm v					
\end{equation}where $\bm r$ is the $q\times 1$ vector, $\bm R$ is the given $q\times l$ matrix with rank $q$, and $\bm v$ is the $q\times 1$ random vector of disturbances, such that $E(\bm v)=0$, $E(\bm v\bm v')=\bm \Psi=\sigma^2 \bm W$ with $\bm W$ is positive definite and $E(\bm v \bm u')=0$. Here, equation (10) represents exact linear restrictions when the random error term $\bm v$ vanishes.

Thiel and Goldberger \cite{th61} then proposed the Mixed Regression Estimator (MRE) by combining model (1) and stochastic linear restrictions (10) as
\begin{equation}\label{section}
\hat{\bm \beta}_{MRE}=(\bm X'\bm \Omega^{-1} \bm X+\bm R' \bm\Psi^{-1} \bm R)^{-1} (\bm X'\bm\Omega^{-1} \bm y+\bm R' \bm\Psi^{-1} \bm r)
\end{equation}
Further, to improve the performance of MRE, several researchers have introduced different types of stochastic restricted estimators such as Stochastic Restricted Liu Estimator (SRLE) \cite{Hu04}, Stochastic Restricted Ridge Estimator (SRRE) \cite{Li10}, Stochastic Restricted Almost Unbiased Ridge Estimator (SRAURE) \cite{Ji14}, Stochastic Restricted Almost Unbiased Liu Estimator (SRAULE) \cite{Ji14}, Stochastic Restricted Principal Component Regression Estimator (SRPCRE) \cite{He14}, Stochastic Restricted r-k class estimator (SRrk) \cite{Jib14} and Stochastic Restricted r-d class estimator (SRrd) \cite{Jib14}, and are given as

\begin{align}\label{section}
\hat{\bm \beta}_{SRRE}=&(\bm X'\bm X+k\bm I)^{-1} \bm X'\bm X\hat{\bm \beta}_{MRE}\\
\hat{\bm \beta}_{SRAURE}=&\left(\bm I-k^{2}(\bm X'\bm X+k\bm I)^{-2}\right)\hat{\bm \beta}_{MRE}\\
\hat{\bm \beta}_{SRLE}=&(\bm X'\bm X+\bm I)^{-1}(\bm X'\bm X+d\bm I)\hat{\bm \beta}_{MRE}\\
\hat{\bm \beta}_{SRAULE}=&\left(\bm I-(1-d)^{2}(\bm X'\bm X+\bm I)^{-2}\right)\hat{\bm \beta}_{MRE}\\
\hat{\bm \beta}_{SRPCRE}=& \bm T_{h}\bm T_{h}'\hat{\bm \beta}_{MRE}\\
\hat{\bm \beta}_{SRrk}=&\bm T_{h}\bm T_{h}'(\bm X'\bm X+k\bm I)^{-1} \bm X'\bm X\hat{\bm \beta}_{MRE}\\
\hat{\bm \beta}_{SRrd}=&\bm T_{h}\bm T_{h}'(\bm X'\bm X+\bm I)^{-1}(\bm X'\bm X+d\bm I)\hat{\bm \beta}_{MRE}					
\end{align}respectively.

The superiority of the biased estimators are usually determined based on the Mean Square Error Matrix (MSEM) criterion, and it is defined as
\begin{equation}\label{section}
\begin{split}
   MSEM(\hat{\bm \beta})=&E(\hat{\bm \beta}-\bm \beta)(\hat{\bm \beta}-\bm \beta)'\\
   =&D(\hat{\bm \beta})+Bias(\hat{\bm \beta})Bias(\hat{\bm \beta})'
\end{split}
\end{equation}
where $\hat{\bm \beta}$ is the biased estimator, $D(\hat{\bm \beta})$ is the dispersion matrix of $\hat{\bm \beta}$, and $Bias(\hat{\bm \beta})=E(\hat{\bm \beta})-\bm \beta$ is the bias vector of $\hat{\bm \beta}$.

The exclusion of some relevant explanatory variables in linear regression model is addressed as another critical issue in the econometric research. Suppose the correct model (1) is written as
\begin{equation}\label{section}
 \bm y=\bm X_{1}\bm \beta_{1}+\bm X_{2}\bm \beta_{2}+\bm \epsilon
\end{equation}where $\bm X_{1}$ and $\bm X_{2}$ are the $n\times l$ and $n\times p$ matrices of observations on the $m=l+p$ regressors, $\bm \beta_{1}$ and $\bm \beta_2$ are the $l\times 1$ and $p\times 1$ vectors of unknown coefficients. If the researcher misspecifies the regression model (20) by excluding $p$ regressors as
\begin{equation}\label{section}
 \bm y=\bm X_{1}\bm \beta_{1}+\bm u
\end{equation}where $\bm u=\bm X_{2}\bm \beta_{2}+\bm \epsilon$, then model (21) is a misspecified regression model. Since $E(\bm u)\neq0$ and $\bm X_2$ may have some correlation with $\bm X_1$ if the columns of the $\bm X$ matrix are multicollinear, then one or more assumptions of the linear regression model will be violated, which leads to the biased and inconsistent estimation of parameters.

Several researchers have examined the consequences of the estimation procedure in this situation. Sarkar \cite{Sa89} compared the performance of OLSE, RE and PCRE when multicollinearity exists in a misspecified regression model. $\c{S}$iray \cite{Sir15}, Wu \cite{Wu16} and Chandra and Tyagi \cite{Ch17} examined the efficiency of the r-d class estimator and r-k class estimator over some existing estimators in the misspecified regression model. Recently, Kayanan and Wijekoon \cite{Kay17a} studied the performance RE, AURE, LE, AULE, PCRE, r-k class estimator and r-d class estimator in the misspecified linear regression model. These studies have shown that r-d class estimator and r-k class estimator outperformed the other biased estimators in the misspecified regression model for the selected range of shrinkage parameters when multicollinearity exists among the regressor variables.

Several researchers have studied the performance of stochastic restricted estimators when model (20) is misspecified by excluding $p$ regressors. Ter\"{a}svirta \cite{Te80}, and Hubert and Wijekoon \cite{Hu04} have written the stochastic linear restrictions (10) for the misspecified regression model (21) as
\begin{equation}\label{section}
\bm r=\bm R \bm\beta_1+\bm g+\bm v					
\end{equation}where $\bm g$ is the $q\times 1$ unknown fixed vector.

Ter\"{a}svirta \cite{Te80}, Mittelhmmer \cite{Mt81}, Ohtani and Honda \cite{Oh84}, Kadiyala \cite{Ka86}, Trenkler and Wijekoon \cite{Tr89} and Wijekoon and Trenkler \cite{Wt89} have compared the MRE with the OLSE under misspecified regression model when $p$ regressors are excluded from the model. Further, Hubert and Wijekoon \cite{Hu04} discussed the performance of SRLE over the MRE in the misspecified model. Kayanan and Wijekoon \cite{Kay17b} examined the performance of SRRE, SRAURE, SRAULE, SRPCRE, SRrk and SRrd over the other existing estimators in the misspecified regression model. These studies have shown that SRLE and SRRE outperformed the other stochastic restricted estimators in the misspecified regression model for the selected range of shrinkage parameters when multicollinearity exists among the regressor variables.

Arumairajan and Wijekoon \cite{Ar15} have proposed Generalized Optimal Estimator (GOE) based on MSEM of the RE, AURE, LE and AULE for the correctly specified regression model, and they have shown that GOE outperformed those estimators. Their work motivated us to study the optimal estimators under misspecified regression model by considering MSEM of the RE, AURE, LE, AULE, PCRE, r-k class estimator, r-d class estimator, SRRE, SRAURE, SRLE, SRAULE, SRPCRE, SRrk and SRrd.

The main contributions of this work are to introduce two new estimators; the Sample Information Optimal Estimator (SIOE) and the Stochastic Restricted Optimal Estimator (SROE); for the misspecified linear regression model, and to obtain a common form of superiority conditions of the proposed estimators over some existing estimators based on the MSEM criterion. Further, we employ a Monte Carlo simulation study and a real-world example to illustrate the theoretical results. The rest of the article is organized as follows. The canonical form of the misspecified model and estimators are given in section 2. The proposed optimal estimators and MSEM comparisons are presented in section 3. In section 4, a Monte Carlo simulation study and a real-world example to illustrate the theoretical results are given. Section 5 includes concluding remarks. References and Appendix are given at the end of this article.

\section{Canonical form and estimators}
 Suppose the correct regression model is given in (20), and further, it is misspecified as (21). \subsection{Biased estimators}
To get simplified expressions, we apply spectral decomposition to the symmetric matrix $\bm X_{1}'\bm X_{1}$ \cite{Si86}, since $\bm X_{1}'\bm X_{1}$ is a positive definite matrix. Then we have $\bm T'\bm X_{1}'\bm X_{1}\bm T=\bm \Lambda=diag(\lambda_{1},......,\lambda_{l} )$, where $\bm T=(t_{1},t_{2},......,t_{l})$ is the orthogonal matrix and $\lambda_{i}>0$ being the $i^{th}$ eigenvalue of $\bm X_{1}'\bm X_{1}$. Let $\bm T_h=(t_{1},t_{2},......,t_{h})$ be the remaining column of $\bm T$ having deleted $l-h$ columns where  $h\leq l$. Hence, $\bm T_{h}\bm X_{1}'\bm X_{1}\bm T_{h}=\bm \Lambda_{h}=diag(\lambda_{1},......,\lambda_{h}  )$.

Let $\bm Z=\bm X_{1}\bm T$ and $\bm \gamma=\bm T'\bm \beta_{1}$ then models (20) and (21) can be written in canonical form as

\begin{align}\label{section}
  \bm y=&\bm Z\bm \gamma+\bm \delta+\bm \epsilon\\
  \bm y=&\bm Z\bm \gamma+\bm u
\end{align}respectively, where $\bm \delta=\bm X_{2}\bm \beta_{2}$. Note that when $\bm \delta=0$ the model is correctly specified.

The OLSE of model (24) is given by
\begin{equation}\label{section}
\hat{\bm \gamma}_{OLSE}=(\bm Z'\bm Z)^{-1} \bm Z' \bm y=\bm \Lambda^{-1} \bm Z'\bm y					
\end{equation}

According to Kayanan and Wijekoon \cite{Kay17a}, the generalized form to represent the estimators RE, AURE, LE, AULE, PCR, $r-k$ class estimator and $r-d$ class estimator for model (24) is given by
\begin{equation}\label{section}
\hat{\bm \gamma}_G=\bm G \hat{\bm \gamma}_{OLSE}	
\end{equation}where
\[\hat{\bm \gamma}_G=\begin{cases}
   \hat{\bm \gamma}_{RE}      & \quad \text{if } \bm G=(\bm \Lambda+k\bm I)^{-1} \bm \Lambda  \\
    \hat{\bm \gamma}_{AURE}&\quad \text{if } \bm G=\left(\bm I-k^{2} (\bm \Lambda+k\bm I)^{-2} \right)  \\
    \hat{\bm \gamma}_{LE}&\quad \text{if } \bm G=(\bm \Lambda+\bm I)^{-1} (\bm \Lambda+d\bm I)\\
    \hat{\bm \gamma}_{AULE}&\quad \text{if } \bm G=\left(\bm I-(1-d)^{2} (\bm \Lambda+\bm I)^{-2} \right)  \\
    \hat{\bm \gamma}_{PCRE}&\quad \text{if } \bm G=\bm T_{h} \bm T_{h}'\\
    \hat{\bm \gamma}_{rk}&\quad \text{if } \bm G=\bm T_{h}\bm  T_{h}' (\bm \Lambda+k\bm I)^{-1} \bm \Lambda\\
    \hat{\bm \gamma}_{rd}&\quad \text{if } \bm G=\bm T_{h} \bm T_{h}'(\bm \Lambda+\bm I)^{-1} (\bm \Lambda+d\bm I)
  \end{cases}\]having the shrinkage parameters $k>0$ and  $0<d<1$.

\subsection{Stochastic Restricted estimators}
To get simplified expressions, we apply the simultaneous decomposition for the symmetric matrices $\bm X_{1}'\bm X_{1}$ and $\bm R'\bm \Psi^{-1}\bm R$ \cite{Ka86}, where $\bm X_{1}'\bm X_{1}$ is a positive definite matrix and $\bm R'\bm \Psi^{-1}\bm R$ is a positive semi-definite matrix. Then we have
\[\bm B'\bm X_{1}'\bm X_{1}\bm B=\bm I \qquad  \text{and} \qquad   \bm B'\bm R'\bm \Psi^{-1}\bm R\bm B=\bm \Lambda_{*}\]
where $\bm B$ is a $l\times l$ nonsingular matrix,  $\bm \Lambda_{*}$ is a $l\times l$ diagonal matrix with eigenvalues $\lambda^{*}_{i}>0$ for $i=1,2,......,q$ and $\lambda^{*}_{i}=0$ for $i=q+1,......,l$.

Let $\bm Z_{*}=\bm X_{1}\bm B$,   $\bm R_{*}=\bm R\bm B$,  $\bm \gamma_{*}=\bm B^{-1}\bm \beta_{1}$,   $\bm Z_{*}'\bm Z_{*}=\bm I$   and   $\bm R_{*}'\bm \Psi^{-1}\bm R_{*}=\bm \Lambda_{*}$ then models (20), (21) and (22) can be written as
\begin{align}\label{section}
\bm y=&\bm Z_{*} \bm \gamma_{*}+\bm \delta+\bm \epsilon\\				
\bm y=&\bm Z_{*} \bm \gamma_{*}+\bm u				\\			
\bm r=&\bm R_{*}\bm \gamma_{*}+\bm g+\bm v						
\end{align}respectively. Now the MRE of the model (28) can be written as
\begin{equation}\label{section}
\begin{split}
\bm \gamma_{MRE}=&(\bm Z_{*}' \bm Z_{*}+\bm R_{*}' \bm \Psi^{-1} \bm R_{*} )^{-1} (\bm Z_{*}' \bm y+\bm R_{*}'\bm W^{-1}\bm r)\\
=&(\bm I+\sigma^{2} \bm\Lambda_{*} )^{-1} (\bm Z_{*}' \bm y+\bm R_{*}'\bm W^{-1} \bm r)
\end{split}			
\end{equation}

By following Kayanan and Wijekoon \cite{Kay17b}, the generalized form to represent the stochastic restricted estimators SRRE, SRAURE, SRLE, SRAULE, SRPCR, SRrk and SRrd for  model (28) is given by
\begin{equation}\label{section}
\hat{\bm \gamma}^{*}_G=\bm G_{*} \hat{\bm \gamma}_{MRE}	
\end{equation}where
\[\hat{\bm \gamma}^{*}_G=\begin{cases}
   \hat{\bm \gamma}_{SRRE}      & \quad \text{if } \bm G_{*}=(1+k)^{-1}    \\
    \hat{\bm \gamma}_{SRAURE}&\quad \text{if } \bm G_{*}=(1+k)^{-2} (1+2k)    \\
    \hat{\bm \gamma}_{SRLE}&\quad \text{if } \bm G_{*}=2^{-1} (1+d)  \\
    \hat{\bm \gamma}_{SRAULE}&\quad \text{if } \bm G_{*}= 2^{-2} (1+d)(3-d)\\
    \hat{\bm \gamma}_{SRPCRE}&\quad \text{if } \bm G_{*}=\bm T_{h} \bm T_{h}'\\
    \hat{\bm \gamma}_{SRrk}&\quad \text{if } \bm G_{*}=(1+k)^{-1}\bm T_{h} \bm T_{h}' \\
    \hat{\bm \gamma}_{SRrd}&\quad \text{if } \bm G_{*}=2^{-1} (1+d)\bm T_{h}\bm T_{h}'
  \end{cases}\] having the shrinkage parameters $k>0$ and  $0<d<1$.

\subsection{Stochastic properties of the estimators}
Now we define a common from
\begin{equation}\label{section}
\hat{\bm \gamma}_{\tilde{ G}}=\tilde{\bm G}\tilde{\bm \gamma}
\end{equation}to represent both $\hat{\bm\gamma}_{G}$ and $\hat{\bm\gamma}^{*}_{G}$. Note that, $\hat{\bm \gamma}_{\tilde{ G}}=\hat{\bm\gamma}_{G}$ when $\tilde{\bm G}=\bm G$ and $\tilde{\bm\gamma}=\bm\gamma$, and $\hat{\bm\gamma}_{\tilde{G}}=\hat{\bm\gamma}^{*}_{G}$ when $\tilde{\bm G}=\bm G_{*}$ and $\tilde{\bm\gamma}=\bm\gamma_{*}$.

According to Kayanan and Wijekoon \cite{Kay17a,Kay17b}, the bias vector, dispersion matrix and MSEM of $\hat{\bm \gamma}_{\tilde{ G}}$ can be presented as
\begin{align}\label{section}
Bias(\hat{\bm \gamma}_{\tilde{ G}} )=&\tilde{\bm G}(\tilde{\bm \gamma}+\bm \tau \bm A) -\tilde{\bm \gamma}			 	\\			
D(\hat{\bm \gamma}_{\tilde{G}} ) =&\sigma^{2} \tilde{\bm G}\bm \tau \tilde{\bm G}'\\
MSEM(\hat{\bm \gamma}_{\tilde{G}} )=&\sigma^{2} \tilde{\bm G} \bm \tau \tilde{\bm G}'+\left(\tilde{\bm G}(\tilde{\bm \gamma}+\bm \tau \bm A) -\tilde{\bm \gamma}\right)\left(\tilde{\bm G}(\tilde{\bm \gamma}+\bm \tau\bm A) -\tilde{\bm \gamma}\right)'\end{align}respectively. Note that, equations (33), (34) and (35) give the bias vector, dispersion matrix and MSEM for the
\begin{enumerate}[(i)]
  \item RE, AURE, LE, AULE, PCR, r-k class estimator and r-d class estimator when $\tilde{\bm G}=\bm G$, $\tilde{\bm\gamma}=\bm\gamma$, $\bm\tau=\bm\Lambda^{-1}$ and  $\bm A=\bm Z' \bm\delta$, and
\item SRRE, SRAURE, SRLE, SRAULE, SRPCR, SRrk and SRrd when $\tilde{\bm G}=\bm G_{*}$, $\tilde{\bm\gamma}=\bm\gamma_{*}$, $\bm\tau=(\bm I+\sigma^2 \bm\Lambda_{*} )^{-1}$ and  $\bm A=(\bm Z_{*}' \bm \delta+\bm R_{*}' \bm W^{-1} \bm g)$.\end{enumerate}
 The respective expressions for the each estimator are summarized in Tables C1 and C2 in Appendix C.
\section{Optimal estimators}
Note that the Scalar Mean Square Error (SMSE) of $\hat{\bm \gamma}_{\tilde{G}}$ in the common form can be written as
\begin{equation}\label{section}
\begin{split}
   SMSE(\hat{\bm \gamma}_{\tilde{G}})=&tr\left(MSEM(\hat{\bm\gamma}_{\tilde{G}} )\right)\\
   =&\sigma^{2} tr\left(\tilde{\bm G}\bm \tau \tilde{\bm G}'\right)+\left(\tilde{\bm G}(\tilde{\bm\gamma}+\bm\tau \bm A) -\tilde{\bm\gamma}\right)'\left(\tilde{\bm G}(\tilde{\bm \gamma}+\bm\tau \bm A) -\tilde{\bm\gamma}\right)\\
   =&\sigma^{2} tr\left(\tilde{\bm G} \bm\tau \tilde{\bm G}'\right)+(\tilde{\bm\gamma}+\bm\tau \bm A)'\tilde{\bm G}'\tilde{\bm G}(\tilde{\bm\gamma}+\bm\tau \bm A)-(\tilde{\bm\gamma}+\bm\tau \bm A)'\tilde{\bm G}'\tilde{\bm \gamma}\qquad\\
   &-\tilde{\bm\gamma}'\tilde{\bm G}(\tilde{\bm\gamma}+\bm\tau \bm A)+\tilde{\bm\gamma}'\tilde{\bm\gamma}
\end{split}
\end{equation}
Now by differentiating (36) with respect to $\tilde{\bm G}$ we obtain (refer Appendix A for matrix operations)
\begin{equation}\label{section}
\begin{split}
\frac{\partial\left(SMSE(\hat{\bm\gamma}_G)\right)}{\partial \tilde{\bm G}}=&\sigma^{2} \tilde{\bm G} (\bm\tau+\bm\tau' )+2\tilde{\bm G} (\tilde{\bm\gamma}+\bm\tau \bm A) (\tilde{\bm\gamma}+\bm\tau \bm A)'-(\tilde{\bm\gamma}+\bm\tau \bm A)\tilde{\bm\gamma}'-\tilde{\bm\gamma}(\tilde{\bm\gamma}+\bm\tau \bm A)'\qquad\\
=&2\tilde{\bm G}\left(\sigma^{2}\bm\tau+(\tilde{\bm\gamma}+\bm\tau \bm A) (\tilde{\bm\gamma}+\bm\tau \bm A)'\right) -(\tilde{\bm\gamma}+\bm\tau \bm A)\tilde{\bm\gamma}'-\tilde{\bm\gamma}(\tilde{\bm\gamma}+\bm\tau \bm A)'
\end{split}
\end{equation}
Since $\bm \tau$ is symmetric and  positive definite matrix, then $\sigma^{2}\bm\tau+(\tilde{\bm\gamma}+\bm\tau \bm A) (\tilde{\bm\gamma}+\bm\tau \bm A)'$ is positive definite \cite[see][p.366]{ro95}.

Equating (37) to null matrix, we can find the optimum $\tilde{\bm G}$, which is
\begin{equation}\label{section}
\tilde{\bm G}_{opt}=2^{-1}\left((\tilde{\bm \gamma}+\bm\tau \bm A)\tilde{\bm\gamma}'+\tilde{\bm\gamma}(\tilde{\bm\gamma}+\bm\tau \bm A)'\right)\left(\sigma^{2}\bm\tau+(\tilde{\bm\gamma}+\bm\tau \bm A) (\tilde{\bm\gamma}+\bm\tau \bm A)'\right)^{-1}
\end{equation}
Note that, the only unknown parameter in the above equation is $\tilde{\bm \gamma}$.\\

By substituting $\tilde{\bm\gamma}=\bm\gamma$, $\bm\tau=\bm\Lambda^{-1}$ and  $\bm A=\bm Z'\bm\delta$ in equation (38), now we define the Sample Information Optimal estimator (SIOE) as
\begin{equation}\label{section}
\hat{\bm\gamma}_{SIOE}=\bm G_{opt}\hat{\bm\gamma}_{OLSE}					
\end{equation} where \\$\bm G_{opt}=2^{-1}\left((\bm\gamma+\bm\Lambda^{-1} \bm Z'\bm\delta)\bm\gamma'+\bm\gamma(\bm\gamma+\bm\Lambda^{-1} \bm Z'\bm\delta)'\right)\left(\sigma^{2}\bm\Lambda^{-1}+(\bm\gamma+\bm\Lambda^{-1} \bm Z'\bm\delta) (\bm\gamma+\bm\Lambda^{-1} \bm Z'\bm\delta)'\right)^{-1}$.\\

Further, by substituting $\tilde{\bm\gamma}=\bm\gamma_{*}$, $\bm\tau=(\bm I+\sigma^2 \bm\Lambda_{*} )^{-1}$ and  $\bm A=(\bm Z_{*}' \bm\delta+\bm R_{*}' \bm W^{-1} \bm g)$ in equation (38), we define the Stochastic Restricted Optimal estimator (SROE) as
\begin{equation}\label{section}
\hat{\bm\gamma}_{SROE}=\bm G^{*}_{opt}\hat{\bm\gamma}_{MRE}					
\end{equation}  where\\ $\bm G^{*}_{opt}=2^{-1}\{(\bm\gamma_{*}+(\bm I+\sigma^2 \bm\Lambda_{*} )^{-1} (\bm Z_{*}' \bm\delta+\bm R_{*}' \bm W^{-1} \bm g))\bm\gamma_{*}'+\bm\gamma_{*}(\bm\gamma_{*}+(\bm I+\sigma^2 \bm\Lambda_{*} )^{-1}(\bm Z_{*}' \bm\delta+\bm R_{*}' \bm W^{-1} \bm g))'\}(\sigma^{2}(\bm I+\sigma^2 \bm\Lambda_{*} )^{-1}+(\bm\gamma_{*}+(\bm I+\sigma^2 \bm\Lambda_{*} )^{-1} (\bm Z_{*}' \bm\delta+\bm R_{*}' \bm W^{-1} \bm g)) (\bm \gamma_{*}+(\bm I+\sigma^2 \bm\Lambda_{*} )^{-1} (\bm Z_{*}' \bm\delta+\bm R_{*}' \bm W^{-1} \bm g))')^{-1}$.

In equation (38), $\tilde{\bm\gamma}$ may be either $\bm\gamma=\bm T'\bm\beta_{1}$ or $\bm\gamma_{*}=\bm B^{-1}\bm\beta_{1}$. Since $\bm\beta_{1}$ is an unknown parameter in model (21), it is necessary to identify an estimated value for $\bm \beta_{1}$ to substitute $\bm G_{opt}$ and $ \bm G^{*}_{opt}$ in equation (39) and (40), respectively, when estimating SIOE and SROE. According to the method suggested by Newhouse and Oman \cite{new71}, if the MSEM is a function of the true regression coefficient vector $\bm \beta$, the error variance $\sigma^2$ and shrinkage parameter $k$, then the MSEM can be minimized when $\bm\beta$ is the normalized eigenvector corresponding to the largest eigenvalue of $\bm X'\bm X$ matrix which satisfy the constraint $\bm\beta'\bm\beta=1$, where $\bm X$ is the standardized matrix of regressor variables. Following this approach, first, we have to standardize the regressor variables before estimating SIOE and SROE to select the vector  $\bm\beta_{1}$.

Now, the bias vector, dispersion matrix and MSEM of SIOE and SROE can be obtained by substituting
\begin{enumerate}[(i)]
  \item $\hat{\bm \gamma}_{\tilde{G}}=\hat{\bm\gamma}_{SIOE}$, $\tilde{\bm G}=\bm G_{opt}$, $\tilde{\bm\gamma}=\bm\gamma$, $\bm\tau=\bm\Lambda^{-1}$ and  $\bm A=\bm Z'\bm\delta$, and
  \item $\hat{\bm \gamma}_{\tilde{G}}=\hat{\bm\gamma}_{SROE}$, $\tilde{\bm G}=\bm G^{*}_{opt}$, $\tilde{\bm\gamma}=\bm\gamma_{*}$, $\bm\tau=(\bm I+\sigma^2 \bm\Lambda_{*} )^{-1}$ and  $\bm A=(\bm Z_{*}' \bm\delta+\bm R_{*}' \bm W^{-1} \bm g)$,
\end{enumerate} respectively, to equations (33), (34) and (35).
\begin{remark1}
  Note that when $\bm\delta=0$ in equation (39), $\bm G_{opt}=\tilde{\bm\gamma}\tilde{\bm\gamma}'\left(\sigma^{2}\bm\Lambda^{-1}+\tilde{\bm\gamma} \tilde{\bm\gamma}'\right)^{-1}$. This is the SIOE for the correctly specified model introduced by Arumairajan and Wijekoon \cite{Ar15}.
\end{remark1}
\begin{remark1}
  When $\bm\delta=0$ and $\bm g=0$ in equation (40), we can obtain the SROE for the correctly specified model.
\end{remark1}
\subsection{Mean Square Error Matrix (MSEM) comparison}
Now we state the following theorems to present the superiority conditions of $\hat{\bm\gamma}_{SIOE}$ and $\hat{\bm\gamma}_{SROE}$ over $\hat{\bm\gamma}_G$ and $\hat{\bm\gamma}^{*}_G$, respectively, in the MSEM criterion.
\begin{theorem}
  If the largest eigenvalue of the matrix $\bm G_{opt} \bm\tau \bm G_{opt}' (\bm G\bm\tau \bm G' )^{-1}$ is less than one, $\hat{\bm\gamma}_{SIOE}$ is superior to $\hat{\bm\gamma}_G$ if and only if $(\bm G_{opt} (\bm\gamma+\bm\tau \bm A)-\bm\gamma)'\left(\sigma^{2} (\bm G \bm\tau \bm G'-\bm G_{opt} \bm\tau \bm G_{opt}' )+(\bm G (\bm\gamma+\bm\tau \bm A)-\bm\gamma) (\bm G (\bm\gamma+\bm \tau \bm A)-\bm\gamma)' \right)^{-1} (\bm G_{opt} (\bm\gamma+\bm\tau \bm A)-\bm\gamma)\leq1$, where $\bm\tau=\bm\Lambda^{-1}$ and  $\bm A=\bm Z'\bm \delta$.
\end{theorem}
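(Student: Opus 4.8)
The plan is to run the whole comparison through the common MSEM formula (35) and then collapse the resulting matrix inequality to the single scalar inequality in the statement by means of the classical rank-one update lemma.

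First I would form the Loewner difference $\bm\Delta:=MSEM(\hat{\bm\gamma}_G)-MSEM(\hat{\bm\gamma}_{SIOE})$. Both estimators are instances of the common form $\hat{\bm\gamma}_{\tilde{\bm G}}$ sharing the \emph{same} $\bm\tau=\bm\Lambda^{-1}$, $\bm A=\bm Z'\bm\delta$ and $\tilde{\bm\gamma}=\bm\gamma$; only the matrix factor differs, being $\bm G$ for $\hat{\bm\gamma}_G$ and $\bm G_{opt}$ for $\hat{\bm\gamma}_{SIOE}$. Substituting into (35) gives
\[
\bm\Delta=\sigma^{2}\bigl(\bm G\bm\tau\bm G'-\bm G_{opt}\bm\tau\bm G_{opt}'\bigr)+\bm b_G\bm b_G'-\bm b_{opt}\bm b_{opt}',
\]
where $\bm b_G=\bm G(\bm\gamma+\bm\tau\bm A)-\bm\gamma$ and $\bm b_{opt}=\bm G_{opt}(\bm\gamma+\bm\tau\bm A)-\bm\gamma$ are the two bias vectors from (33). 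By definition $\hat{\bm\gamma}_{SIOE}$ is superior to $\hat{\bm\gamma}_G$ in the MSEM criterion exactly when $\bm\Delta$ is nonnegative definite.

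Next I would show that $\bm D:=\sigma^{2}\bigl(\bm G\bm\tau\bm G'-\bm G_{opt}\bm\tau\bm G_{opt}'\bigr)+\bm b_G\bm b_G'$ is positive definite, so that $\bm D^{-1}$ (hence the scalar in the theorem) makes sense. Since $\bm G_{opt}\bm\tau\bm G_{opt}'(\bm G\bm\tau\bm G')^{-1}$ is similar to the symmetric matrix $(\bm G\bm\tau\bm G')^{-1/2}\bm G_{opt}\bm\tau\bm G_{opt}'(\bm G\bm\tau\bm G')^{-1/2}$, the two have the same eigenvalues; the hypothesis that this common largest eigenvalue is less than one is therefore equivalent to $\bm G\bm\tau\bm G'-\bm G_{opt}\bm\tau\bm G_{opt}'>0$ in the Loewner order. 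Adding the nonnegative definite rank-one term $\bm b_G\bm b_G'$ preserves positive definiteness, giving $\bm D>0$.

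Finally I would invoke the standard fact that for a positive definite matrix $\bm D$ and a conformable vector $\bm b$, the matrix $\bm D-\bm b\bm b'$ is nonnegative definite if and only if $\bm b'\bm D^{-1}\bm b\le1$ (see, e.g., \cite{ro95}). Applied with $\bm b=\bm b_{opt}$ this gives $\bm\Delta=\bm D-\bm b_{opt}\bm b_{opt}'\ge0$ iff $\bm b_{opt}'\bm D^{-1}\bm b_{opt}\le1$, which after reinserting the expressions for $\bm D$ and $\bm b_{opt}$ is precisely the stated condition (and the SROE analogue would follow verbatim using the substitutions $\tilde{\bm\gamma}=\bm\gamma_{*}$, $\bm\tau=(\bm I+\sigma^2\bm\Lambda_{*})^{-1}$, $\bm A=\bm Z_{*}'\bm\delta+\bm R_{*}'\bm W^{-1}\bm g$). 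I expect the only step requiring genuine care to be the translation of the eigenvalue hypothesis into the Loewner inequality $\bm G\bm\tau\bm G'-\bm G_{opt}\bm\tau\bm G_{opt}'>0$ together with the implicit assumption that $\bm G\bm\tau\bm G'$ is invertible; once that is in place, the conclusion is a direct application of the rank-one update lemma.
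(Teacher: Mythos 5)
Your proposal is correct and follows essentially the same route as the paper: the eigenvalue hypothesis is used (via Lemma B.2) to establish that the dispersion difference $\sigma^{2}(\bm G\bm\tau\bm G'-\bm G_{opt}\bm\tau\bm G_{opt}')$ is positive definite, and the conclusion is then the Trenkler--Toutenburg MSEM comparison criterion (Lemma B.1), which is exactly the rank-one update fact you invoke. The only difference is presentational --- you unpack Lemma B.1 into its two elementary ingredients and spell out why the eigenvalue condition is equivalent to the Loewner inequality, whereas the paper cites both lemmas directly.
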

\begin{proof}
  Based on equation (34) now we obtain
  \begin{equation}
    D(\hat{\bm \gamma}_G ) -D(\hat{\bm\gamma}_{SIOE} ) =\sigma^{2} \left(\bm G \bm\tau \bm G'-\bm G_{opt} \bm\tau \bm G_{opt}' \right)
  \end{equation}

Note that $\bm G\bm\tau \bm G'>0$ and  $\bm G_{opt}\bm\tau \bm G_{opt}'>0$ \cite[see][p.366]{ro95}. According to Lemma B.2 in Appendix B, $G\tau G'-G_{opt} \tau G_{opt}' >0$ if the largest eigenvalue of the matrix $\bm G_{opt} \bm\tau \bm G_{opt}' (\bm G\bm\tau \bm G' )^{-1}$ is less than one.
Then according to Lemma B.1 in Appendix B, $MSEM(\hat{\bm\gamma}_G )-MSEM(\hat{\bm\gamma}_{SIOE} )$ is nonnegative definite if $(\bm G_{opt} (\bm\gamma+\bm\tau \bm A)-\bm\gamma)'\left(\sigma^{2} (\bm G \bm\tau \bm G'-\bm G_{opt} \bm\tau \bm G_{opt}' )+(\bm G (\bm\gamma+\bm\tau \bm A)-\bm\gamma) (\bm G (\bm\gamma+\bm \tau \bm A)-\bm\gamma)' \right)^{-1} (\bm G_{opt} (\bm\gamma+\bm\tau \bm A)-\bm\gamma)\leq1$, where $\bm\tau=\bm\Lambda^{-1}$ and  $\bm A=\bm Z'\bm \delta$. This completes the proof.
\end{proof}

\begin{theorem}
  If the largest eigenvalue of the matrix $\bm G^{*}_{opt} \bm\tau \bm G^{*'}_{opt} \bm (G_{*} \bm \tau \bm G_{*}' )^{-1}$ is less than one, $\hat{\bm\gamma}_{SROE}$ is superior to $\hat{\bm\gamma}_G$ if and only if $(\bm G^{*}_{opt} (\bm\gamma_{*}+\bm\tau \bm A)-\bm\gamma_{*})'\left(\sigma^{2} (\bm G_{*} \bm\tau \bm G_{*}'-\bm G^{*}_{opt} \bm\tau \bm G^{*'}_{opt} )+(\bm G_{*} (\gamma_{*}+\bm\tau \bm A)-\bm\gamma_{*}) (\bm G_{*} (\bm\gamma_{*}+\bm\tau \bm A)-\bm\gamma_{*})' \right)^{-1} (\bm G^{*}_{opt} (\bm\gamma_{*}+\bm\tau \bm A)-\bm\gamma_{*})\leq1$, where $\bm \tau=(\bm I+\sigma^2 \bm\Lambda_{*} )^{-1}$ and  $\bm A=(\bm Z_{*}' \bm\delta+\bm R_{*}' \bm W^{-1} \bm g)$.
\end{theorem}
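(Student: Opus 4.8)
The plan is to follow verbatim the three-step template of the proof of Theorem 3.1, replacing the sample-information objects by their stochastic-restricted counterparts according to item (ii) in the list after equation (35): $\tilde{\bm G}=\bm G_{*}$ (respectively $\bm G^{*}_{opt}$), $\tilde{\bm\gamma}=\bm\gamma_{*}$, $\bm\tau=(\bm I+\sigma^{2}\bm\Lambda_{*})^{-1}$ and $\bm A=(\bm Z_{*}'\bm\delta+\bm R_{*}'\bm W^{-1}\bm g)$.

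First I would record the two dispersion matrices from equation (34), namely $D(\hat{\bm\gamma}^{*}_{G})=\sigma^{2}\bm G_{*}\bm\tau\bm G_{*}'$ and $D(\hat{\bm\gamma}_{SROE})=\sigma^{2}\bm G^{*}_{opt}\bm\tau\bm G^{*'}_{opt}$, so that their difference is $\sigma^{2}(\bm G_{*}\bm\tau\bm G_{*}'-\bm G^{*}_{opt}\bm\tau\bm G^{*'}_{opt})$. Because $\bm\tau$ is symmetric and positive definite, both $\bm G_{*}\bm\tau\bm G_{*}'$ and $\bm G^{*}_{opt}\bm\tau\bm G^{*'}_{opt}$ are positive (semi-)definite; invoking Lemma B.2 of Appendix B, the hypothesis that the largest eigenvalue of $\bm G^{*}_{opt}\bm\tau\bm G^{*'}_{opt}(\bm G_{*}\bm\tau\bm G_{*}')^{-1}$ is less than one delivers $\bm G_{*}\bm\tau\bm G_{*}'-\bm G^{*}_{opt}\bm\tau\bm G^{*'}_{opt}>0$, hence $D(\hat{\bm\gamma}^{*}_{G})-D(\hat{\bm\gamma}_{SROE})>0$.

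Next I would bring in the bias vectors from equation (33), $Bias(\hat{\bm\gamma}^{*}_{G})=\bm G_{*}(\bm\gamma_{*}+\bm\tau\bm A)-\bm\gamma_{*}$ and $Bias(\hat{\bm\gamma}_{SROE})=\bm G^{*}_{opt}(\bm\gamma_{*}+\bm\tau\bm A)-\bm\gamma_{*}$, and assemble $MSEM(\hat{\bm\gamma}^{*}_{G})-MSEM(\hat{\bm\gamma}_{SROE})$ from equation (35). Its leading part, $\sigma^{2}(\bm G_{*}\bm\tau\bm G_{*}'-\bm G^{*}_{opt}\bm\tau\bm G^{*'}_{opt})+Bias(\hat{\bm\gamma}^{*}_{G})Bias(\hat{\bm\gamma}^{*}_{G})'$, is positive definite (a positive-definite matrix plus a positive-semidefinite rank-one term), so Lemma B.1 of Appendix B applies and shows that the MSEM difference is nonnegative definite exactly when $Bias(\hat{\bm\gamma}_{SROE})'$ times the inverse of that leading part times $Bias(\hat{\bm\gamma}_{SROE})$ is at most one. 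Unwinding the substitutions $\bm\tau=(\bm I+\sigma^{2}\bm\Lambda_{*})^{-1}$ and $\bm A=(\bm Z_{*}'\bm\delta+\bm R_{*}'\bm W^{-1}\bm g)$ turns this into precisely the inequality stated in the theorem, which completes the argument.

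The step requiring the most care is purely a matter of bookkeeping: since $\bm G^{*}_{opt}$ is not symmetric in general, one must keep $\bm G^{*}_{opt}$ and $\bm G^{*'}_{opt}$ in their correct positions throughout, verify that the leading part above is genuinely positive definite so that Lemma B.1 is legitimately invoked, and make sure the stochastic-restricted substitution of $\bm\tau$ and $\bm A$ is carried out consistently with item (ii) after equation (35). No new idea beyond Lemmas B.1 and B.2 and the MSEM formula (35) is needed; the result is the exact mirror of Theorem 3.1.
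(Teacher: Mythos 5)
Your proposal is correct and follows essentially the same route as the paper's own proof: compute the dispersion difference from equation (34), use Lemma B.2 with the stated eigenvalue hypothesis to establish $\bm G_{*}\bm\tau\bm G_{*}'-\bm G^{*}_{opt}\bm\tau\bm G^{*'}_{opt}>0$, and then apply Lemma B.1 with the bias vectors from equation (33) to obtain the quadratic-form condition. Your added care about the positive definiteness of the leading part and the non-symmetry of $\bm G^{*}_{opt}$ is sensible bookkeeping but does not change the argument.
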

\begin{proof}
  Again Based on equation (34) we obtain
  \begin{equation}
    D(\hat{\bm\gamma}_G ) -D(\hat{\bm\gamma}_{SROE} ) =\sigma^{2} \left(\bm G_{*} \bm\tau \bm G_{*}'-\bm G^{*}_{opt} \bm\tau \bm G^{*'}_{opt} \right)
  \end{equation}

Since $\bm G_{*}\bm\tau \bm G_{*}'>0$ and  $\bm G^{*}_{opt}\bm\tau \bm G^{*'}_{opt}>0$ \cite[see][p.366]{ro95}, $\bm G_{*} \bm\tau \bm G_{*}'-\bm G^{*}_{opt} \bm\tau \bm G^{*'}_{opt} >0$ if the largest eigenvalue of the matrix $\bm G^{*}_{opt} \bm\tau \bm G^{*'}_{opt} (\bm G_{*} \bm\tau \bm G_{*}' )^{-1}$ is less than one (see Lemma B.2 in Appendix B).
Then according to Lemma B.1 in Appendix B, $MSEM(\hat{\bm \gamma}_G )-MSEM(\hat{\bm \gamma}_{SROE} )$ is nonnegative definite if $(\bm G^{*}_{opt} (\bm\gamma_{*}+\bm\tau \bm A)-\bm\gamma_{*})'\left(\sigma^{2} (\bm G_{*} \bm\tau \bm G_{*}'-\bm G^{*}_{opt} \bm\tau \bm G^{*'}_{opt} )+(\bm G_{*} (\gamma_{*}+\bm\tau \bm A)-\bm\gamma_{*}) (\bm G_{*} (\bm\gamma_{*}+\bm\tau \bm A)-\bm\gamma_{*})' \right)^{-1} (\bm G^{*}_{opt} (\bm\gamma_{*}+\bm\tau \bm A)-\bm\gamma_{*})\leq1$, where $\bm \tau=(\bm I+\sigma^2 \bm\Lambda_{*} )^{-1}$ and  $\bm A=(\bm Z_{*}' \bm\delta+\bm R_{*}' \bm W^{-1} \bm g)$. This completes the proof.
\end{proof}
Note that the superiority conditions of SIOE over the biased estimators RE, AURE, LE, AULE, PCRE, r-k class estimator and r-d class estimator can obtain by substituting appropriate expressions in Theorem 3.1. Similarly, the superiority conditions of SROE over the biased estimators SRRE, SRAURE, SRLE, SRAULE, SRPCRE, SRrk and SRrd can obtain by substituting appropriate expressions in Theorem 3.2.
\section{Illustrations of theoretical results }
\subsection{Monte Carlo simulation study}
According to McDonald and Galarneau \cite{Mc75}, now we generate the regressor variables as follows:
\begin{equation}\label{section}
x_{i,j}=\sqrt{(1-\alpha^{2})} z_{i,j}+\alpha z_{i,6}  \qquad;i=1,2,......,n.\;  j=1,2,3,4,5.
\end{equation}
where $z_{i,j}$ is an independent standard normal pseudo random number, and $\alpha$ is specified so that the theoretical correlation between any two explanatory variables is given by $\alpha^2$. A predictor variable is generated by using the following equation
\begin{equation}\label{section}
y_{i}=\beta_{1} x_{i,1}+\beta_{2} x_{i,2}+\beta_{3} x_{i,3}+\beta_{4} x_{i,4}+\beta_{5} x_{i,5}+\epsilon_{i}  \qquad;i=1,2,......,n.
\end{equation}
where $\epsilon_{i}$ is a normal pseudo random number with mean zero and variance one. Also, we choose $\bm \beta=(\beta_{1}, \beta_{2} , \beta_{3}, \beta_{4},\beta_{5})$ as the normalized eigenvector corresponding to the largest eigenvalue of $\bm X'\bm X$ for which $\bm \beta'\bm\beta=1$. Further, we choose $\bm R=(1,1,1,1,1)$  and $\bm g=(0,0,0,0,0)$.
To investigate the effects of different degrees of multicollinearity on the estimators, we choose $\alpha=(0.9,0.99,0.999)$, and to study effect of misspecification, we choose $\bm X_{1}=(x_{1},x_{2},x_{3})$ and $\bm X_{2}=(x_{4},x_{5})$.
For simplicity, we select values $k$ and $d$ in the range(0,1).

The simulation is repeated 2000 times by generating new pseudo random numbers and the simulated SMSE values of the estimators are obtained using the following equation:
\begin{equation}\label{section}
SMSE(\hat{\bm \gamma} )=\frac{1}{2000} \sum_{j=1}^{2000}tr\left(MSEM(\hat{\bm\gamma}_{j})\right)
\end{equation} where $\hat{\bm \gamma}$ represents $\hat{\bm\gamma}_{G}$, $\hat{\bm\gamma}^{*}_{G}$, $\hat{\bm\gamma}_{SIOE}$ or $\hat{\bm\gamma}_{SROE}$.

The results are displayed in Figures 1-6. Figures 1-3 show the estimated SMSE values of the OLSE, RE, AURE, LE, AULE, PCRE, r-k class estimator, r-d class estimator and SIOE when  $\alpha=0.9$, $\alpha=0.99$ and $\alpha=0.999$ for the selected values of shrinkage parameters, respectively. Figures 4-6 show the estimated SMSE values of the MRE, SRRE, SRAURE, SRLE, SRAULE, SRPCRE, SRrk, SRrd and SROE when  $\alpha=0.9$, $\alpha=0.99$ and $\alpha=0.999$ for the selected values of shrinkage parameters, respectively.

\begin{figure}
\centering
\subfloat[Under correctly specified model.]{%
\resizebox*{2.78in}{!}{\includegraphics{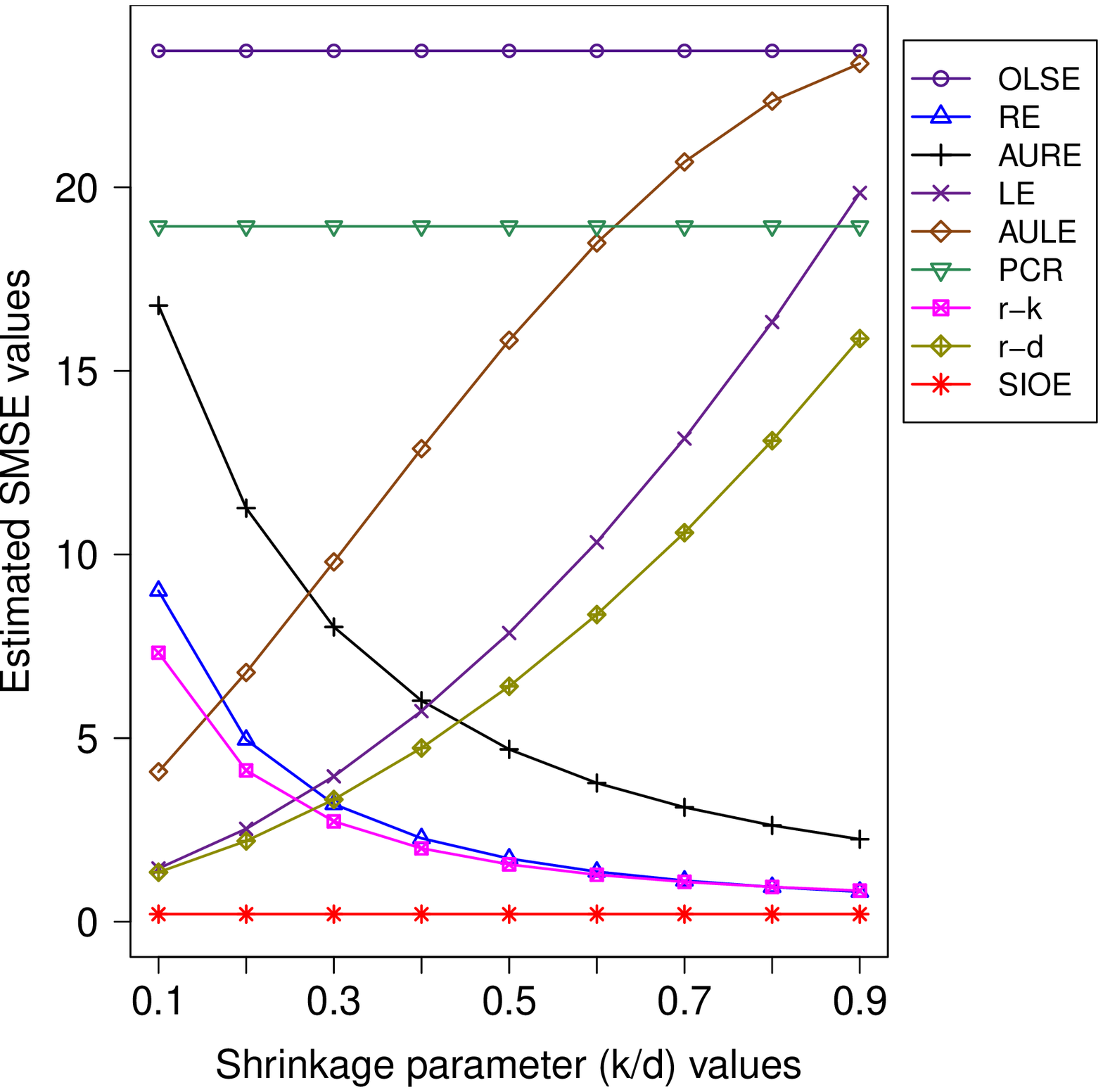}}}\hspace{2pt}
\subfloat[Under misspecified model.]{%
\resizebox*{2.78in}{!}{\includegraphics{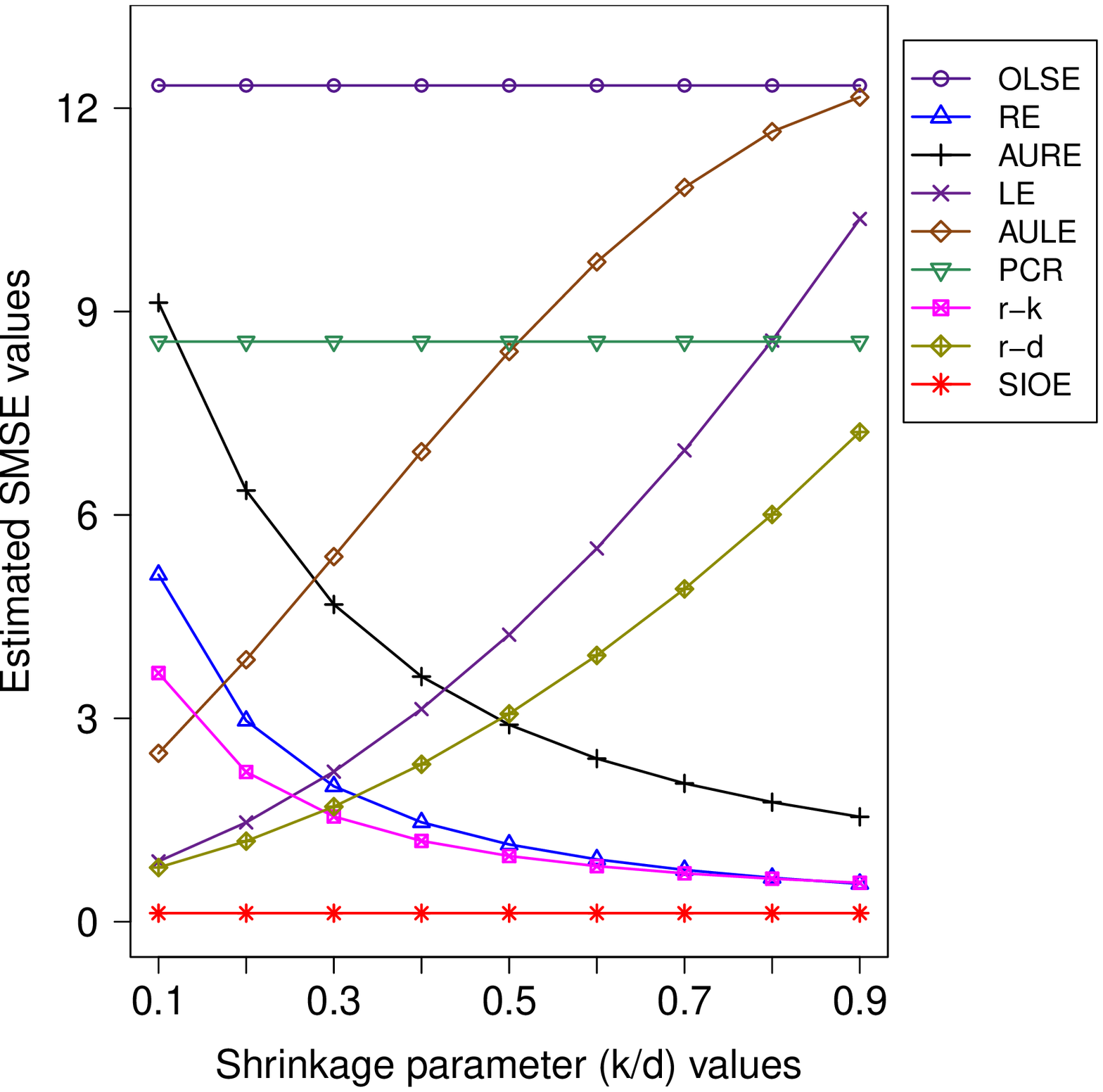}}}
\caption{SMSE values of the OLSE, RE, AURE, LE, AULE, PCRE, r-k class estimator, r-d class estimator and SIOE when $\alpha=0.9$.} \label{f1}
\end{figure}

\begin{figure}
\centering
\subfloat[Under correctly specified model.]{%
\resizebox*{2.78in}{!}{\includegraphics{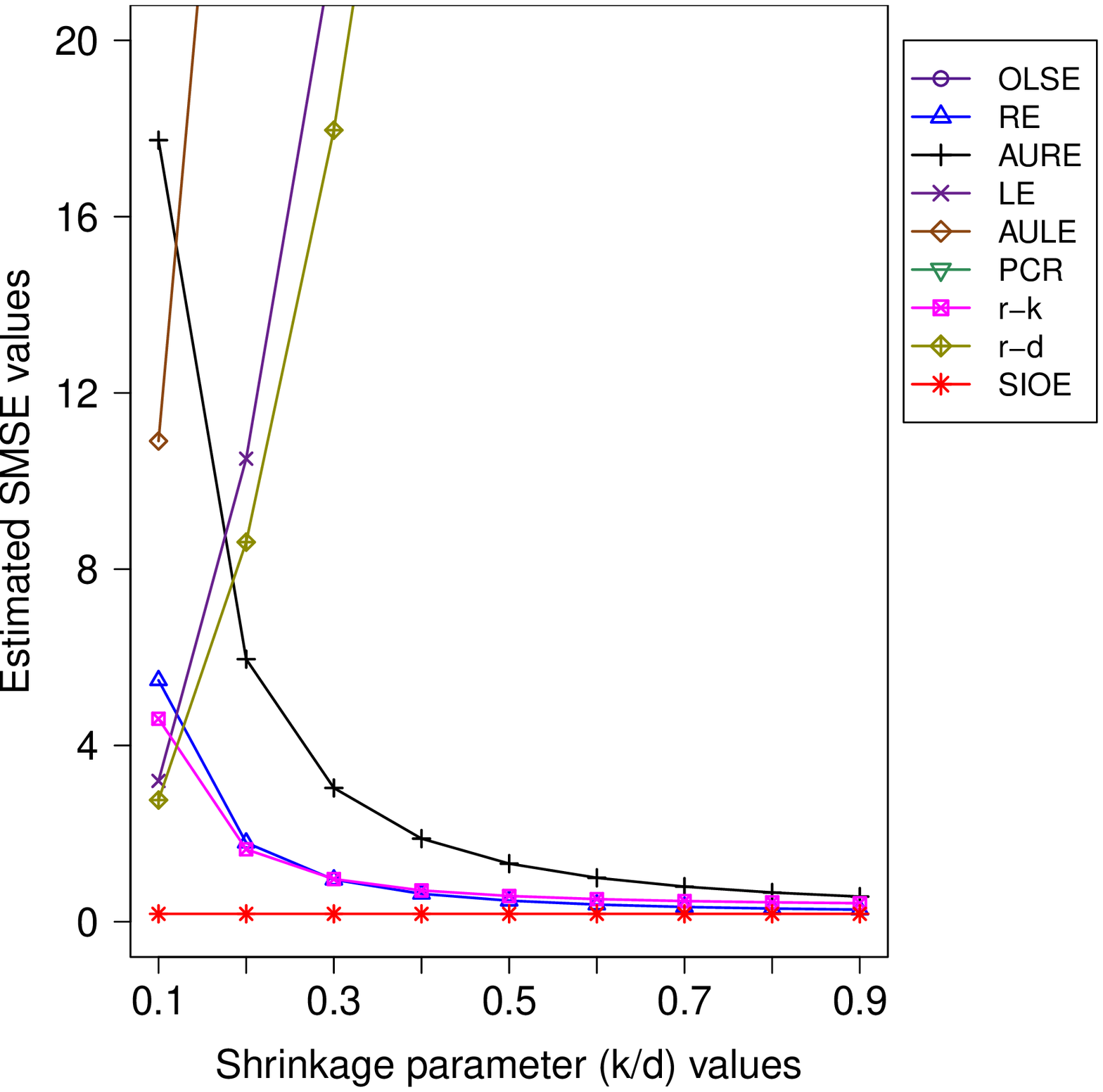}}}\hspace{2pt}
\subfloat[Under misspecified model.]{%
\resizebox*{2.78in}{!}{\includegraphics{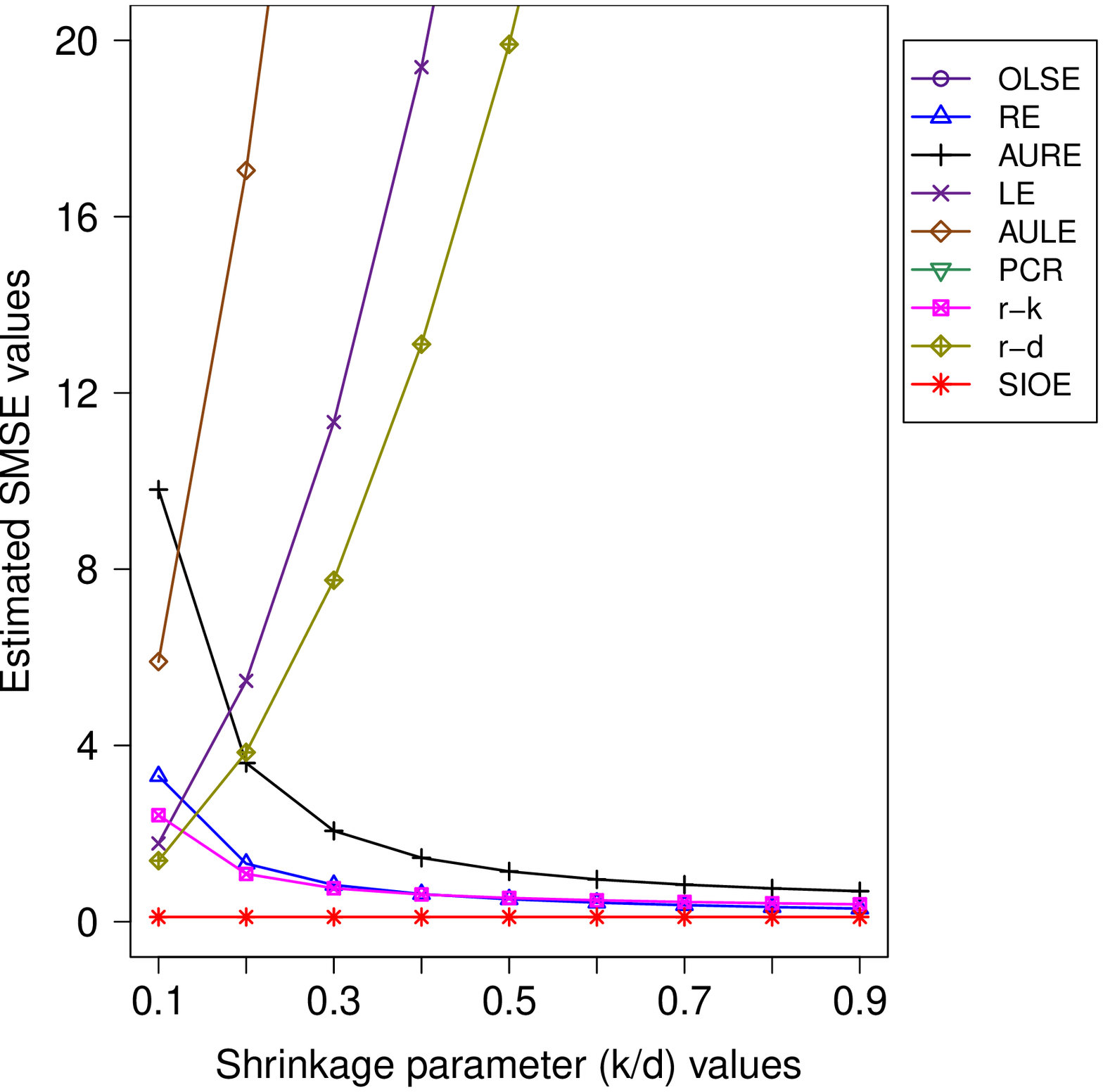}}}
\caption{SMSE values of the OLSE, RE, AURE, LE, AULE, PCRE, r-k class estimator, r-d class estimator and SIOE when $\alpha=0.99$ (the lines of OLSE, LE, AULE, PCRE and r-d class estimator are disappeared due to high SMSE values compared to others).} \label{f1}
\end{figure}
\begin{figure}
\centering
\subfloat[Under correctly specified model.]{%
\resizebox*{2.78in}{!}{\includegraphics{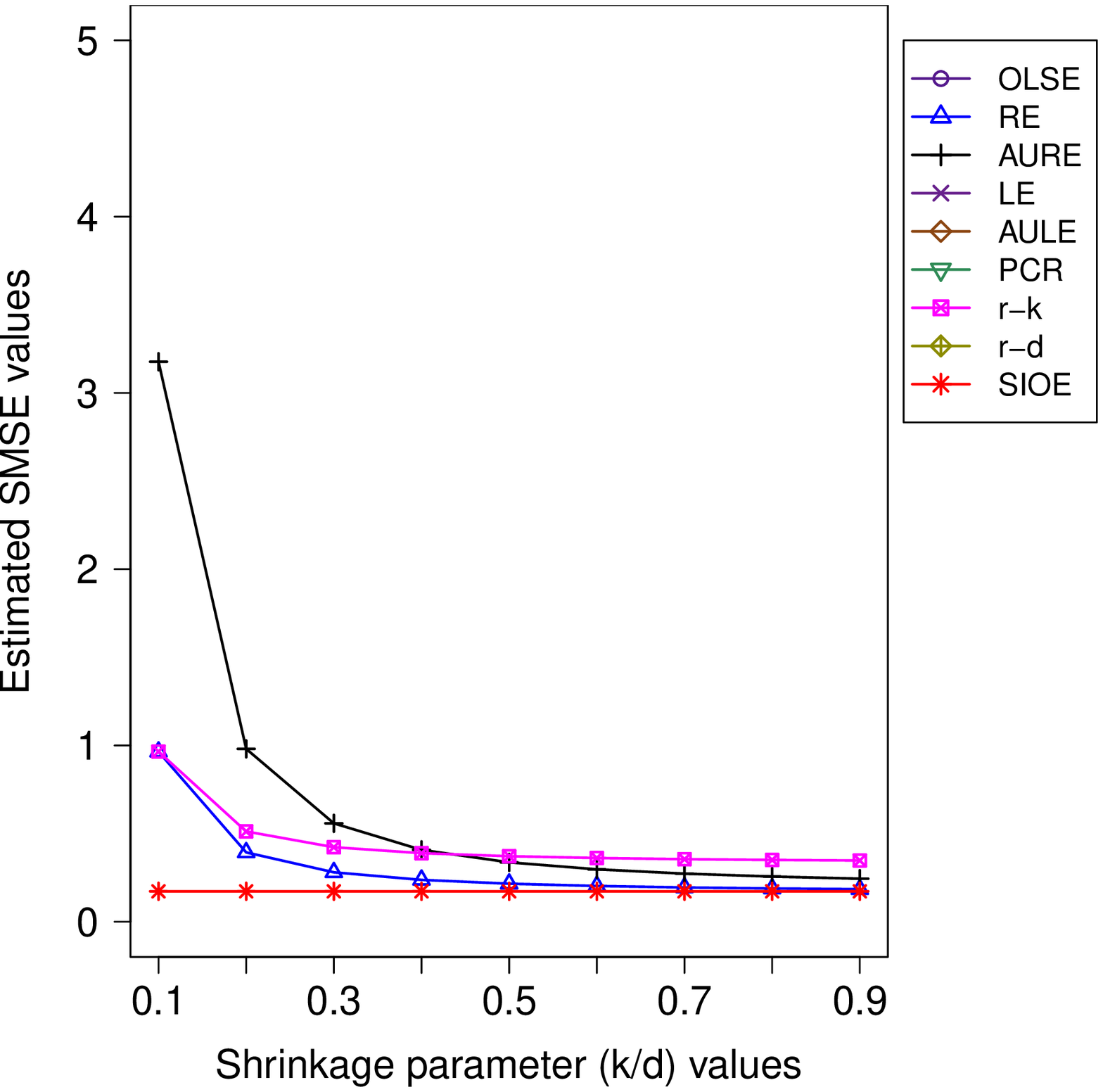}}}\hspace{2pt}
\subfloat[Under misspecified model.]{%
\resizebox*{2.78in}{!}{\includegraphics{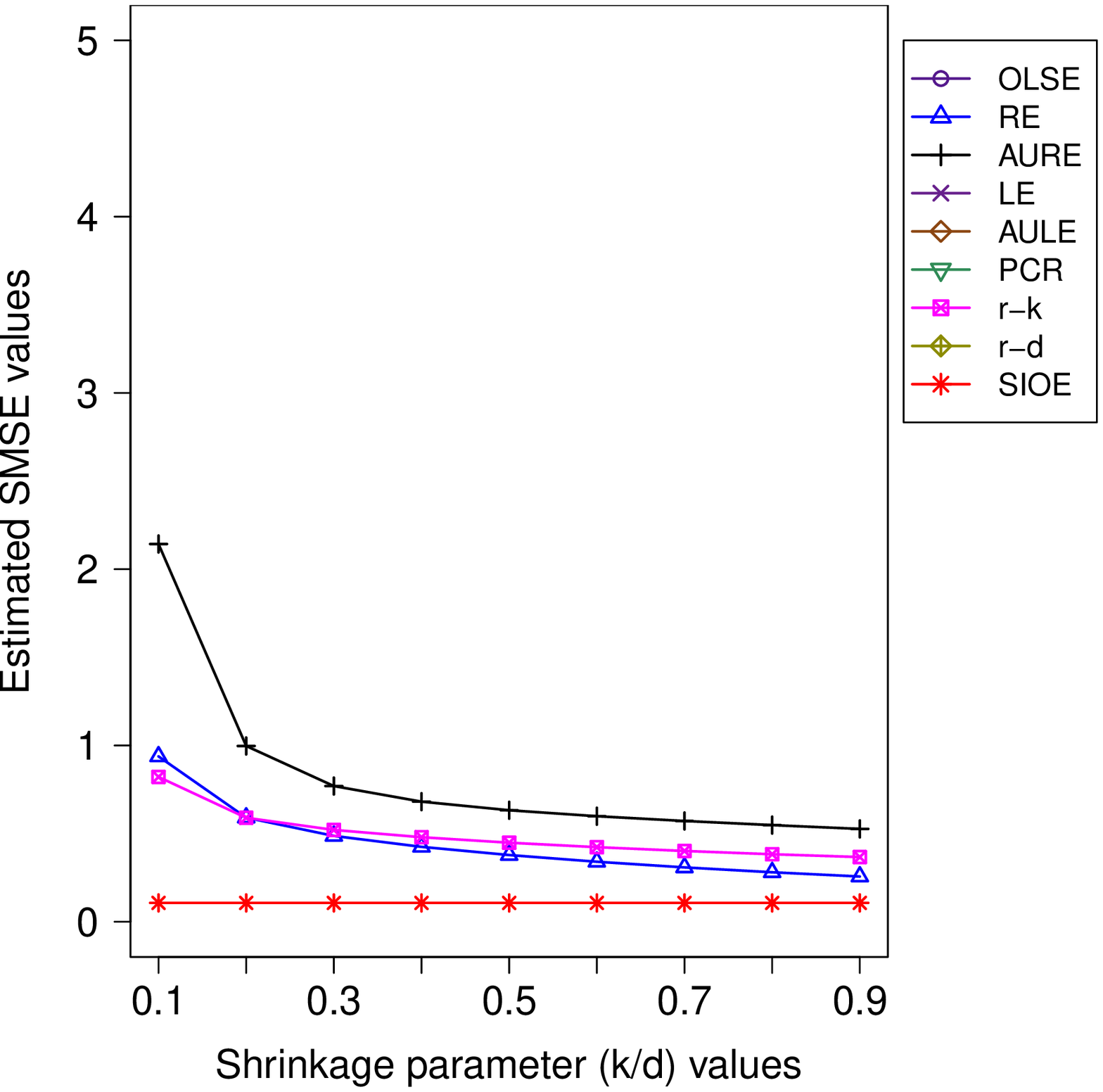}}}
\caption{SMSE values of the OLSE, RE, AURE, LE, AULE, PCRE, r-k class estimator, r-d class estimator and SIOE when $\alpha=0.999$ (the lines of OLSE, LE, AULE, PCRE and r-d class estimator are disappeared due to high SMSE values compared to others).} \label{f1}
\end{figure}

\begin{figure}
\centering
\subfloat[Under correctly specified model.]{%
\resizebox*{2.78in}{!}{\includegraphics{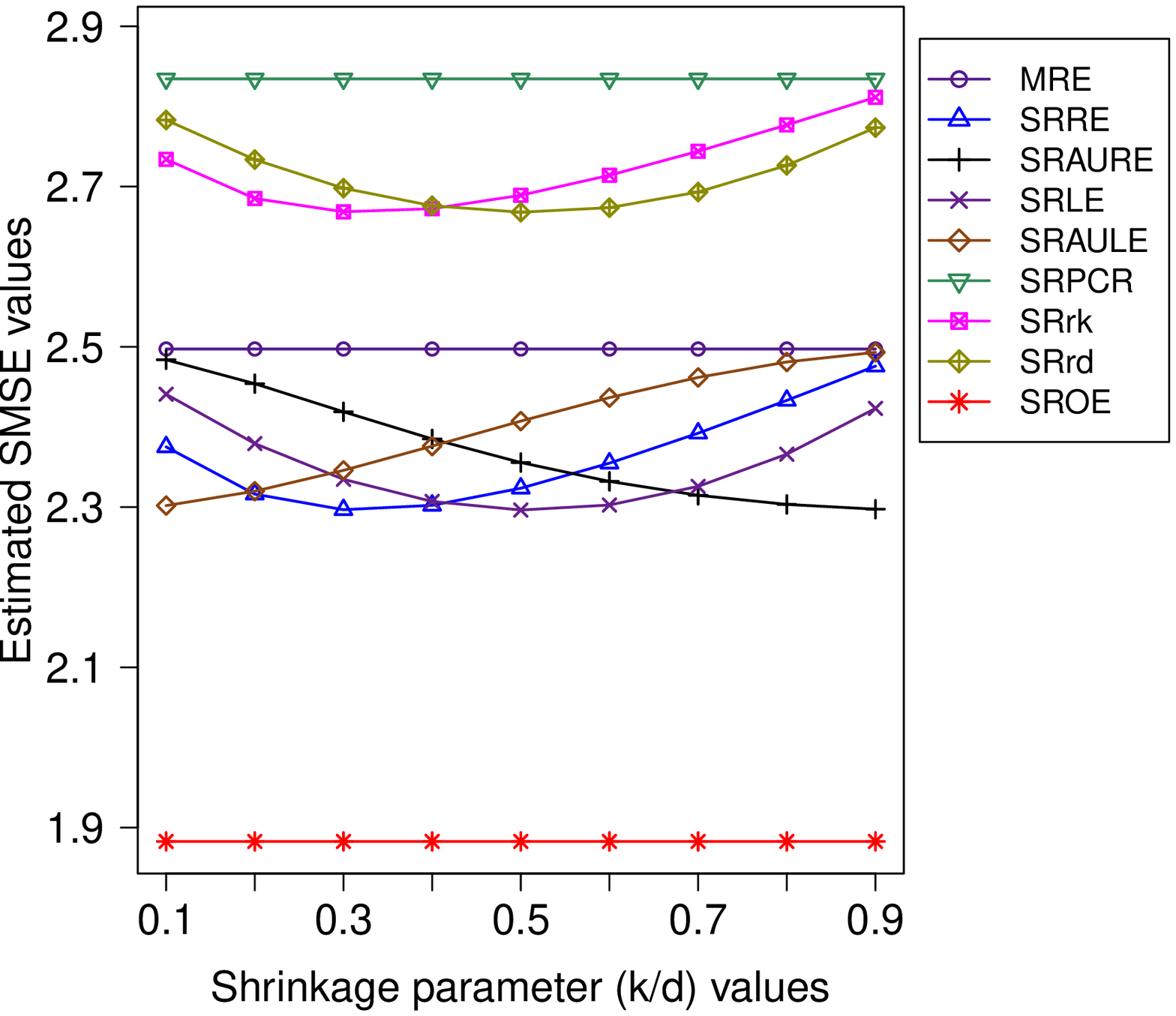}}}\hspace{2pt}
\subfloat[Under misspecified model.]{%
\resizebox*{2.78in}{!}{\includegraphics{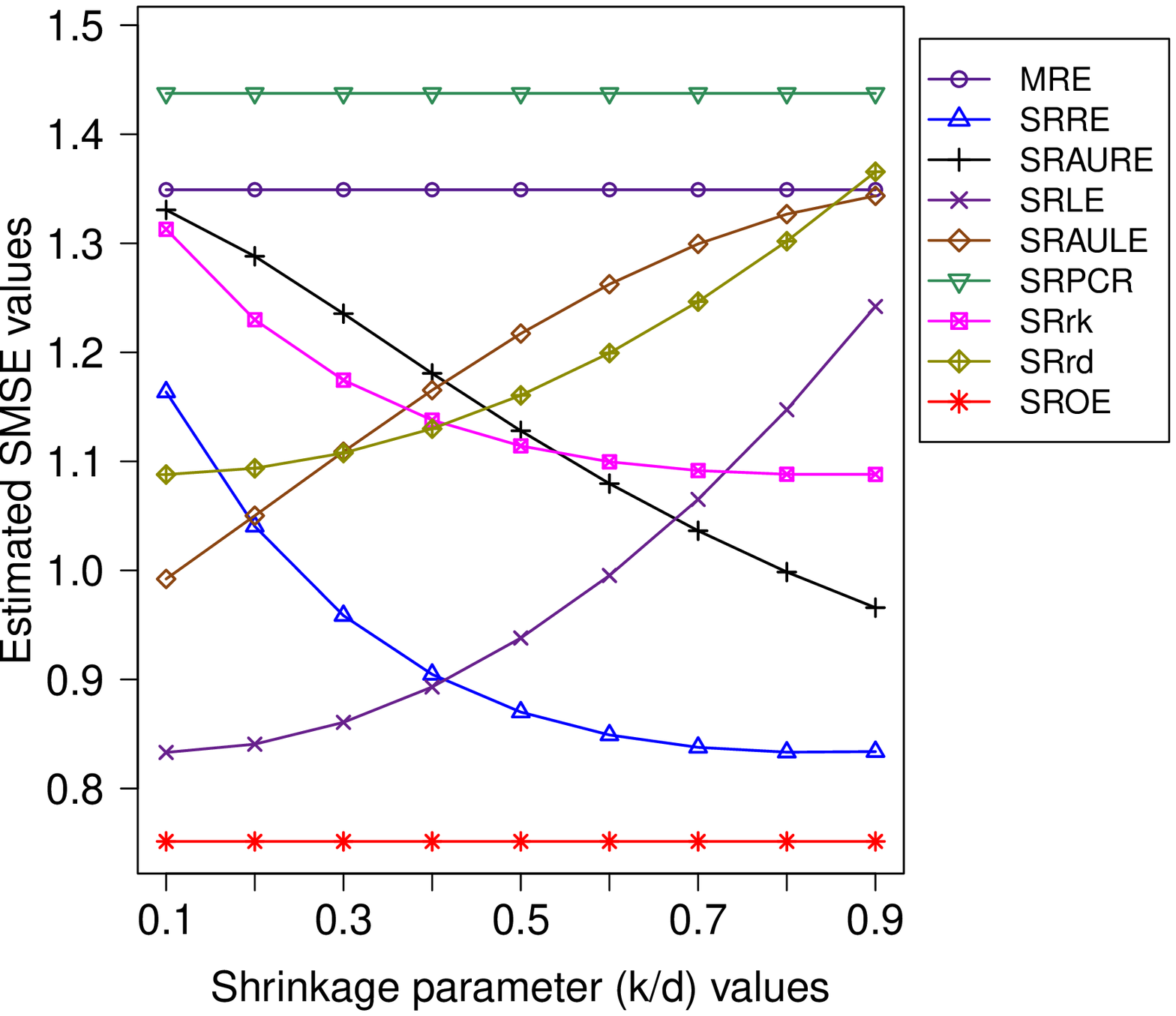}}}
\caption{SMSE values of the MRE, SRRE, SRAURE, SRLE, SRAULE, SRPCRE, SRrk, SRrd and SROE when $\alpha=0.9$.} \label{sample-figure}
\end{figure}

\begin{figure}
\centering
\subfloat[Under correctly specified model.]{%
\resizebox*{2.78in}{!}{\includegraphics{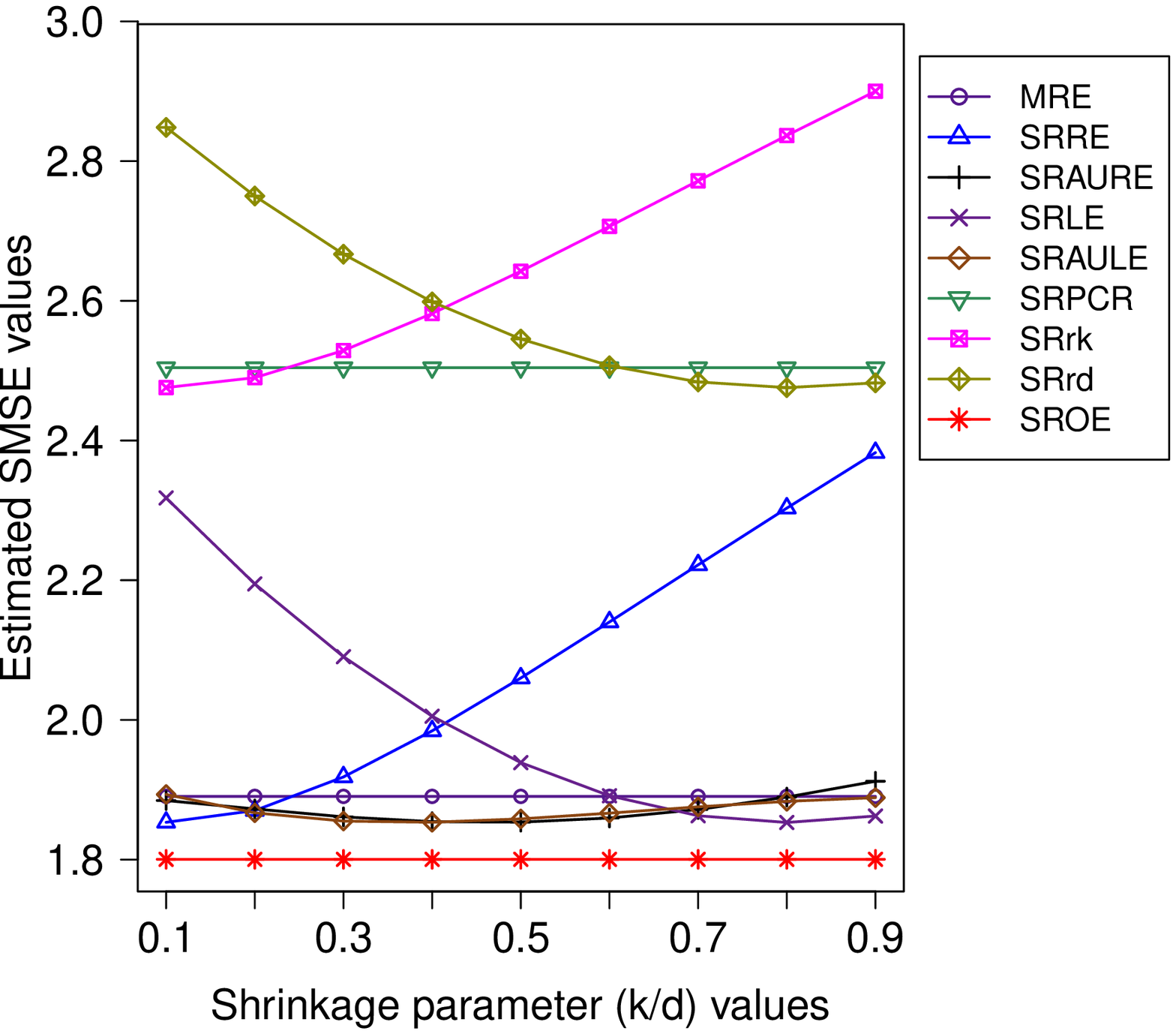}}}\hspace{2pt}
\subfloat[Under misspecified model.]{%
\resizebox*{2.78in}{!}{\includegraphics{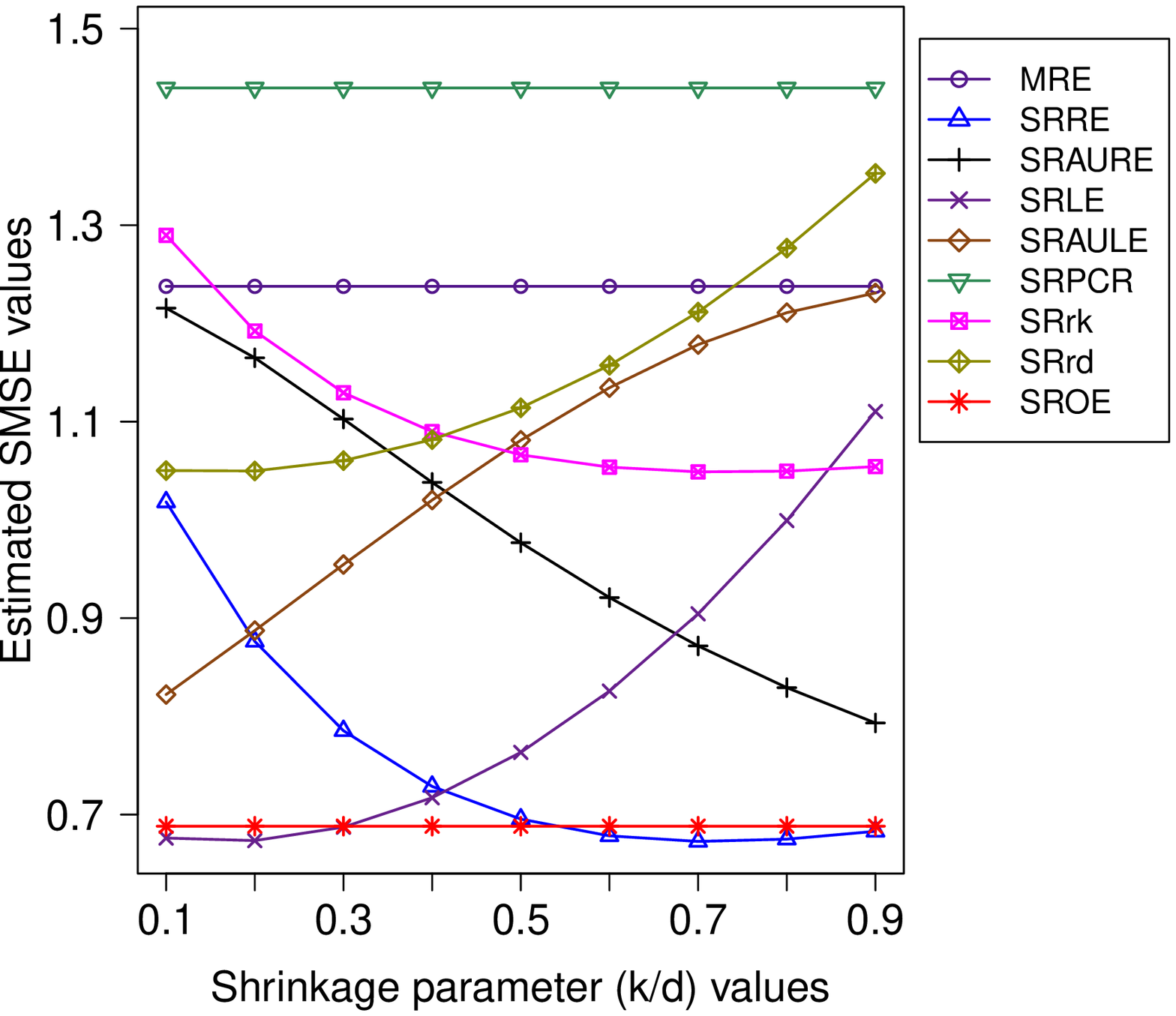}}}
\caption{SMSE values of the MRE, SRRE, SRAURE, SRLE, SRAULE, SRPCRE, SRrk, SRrd and SROE when $\alpha=0.99$.} \label{sample-figure}
\end{figure}

\begin{figure}
\centering
\subfloat[Under correctly specified model.]{%
\resizebox*{2.78in}{!}{\includegraphics{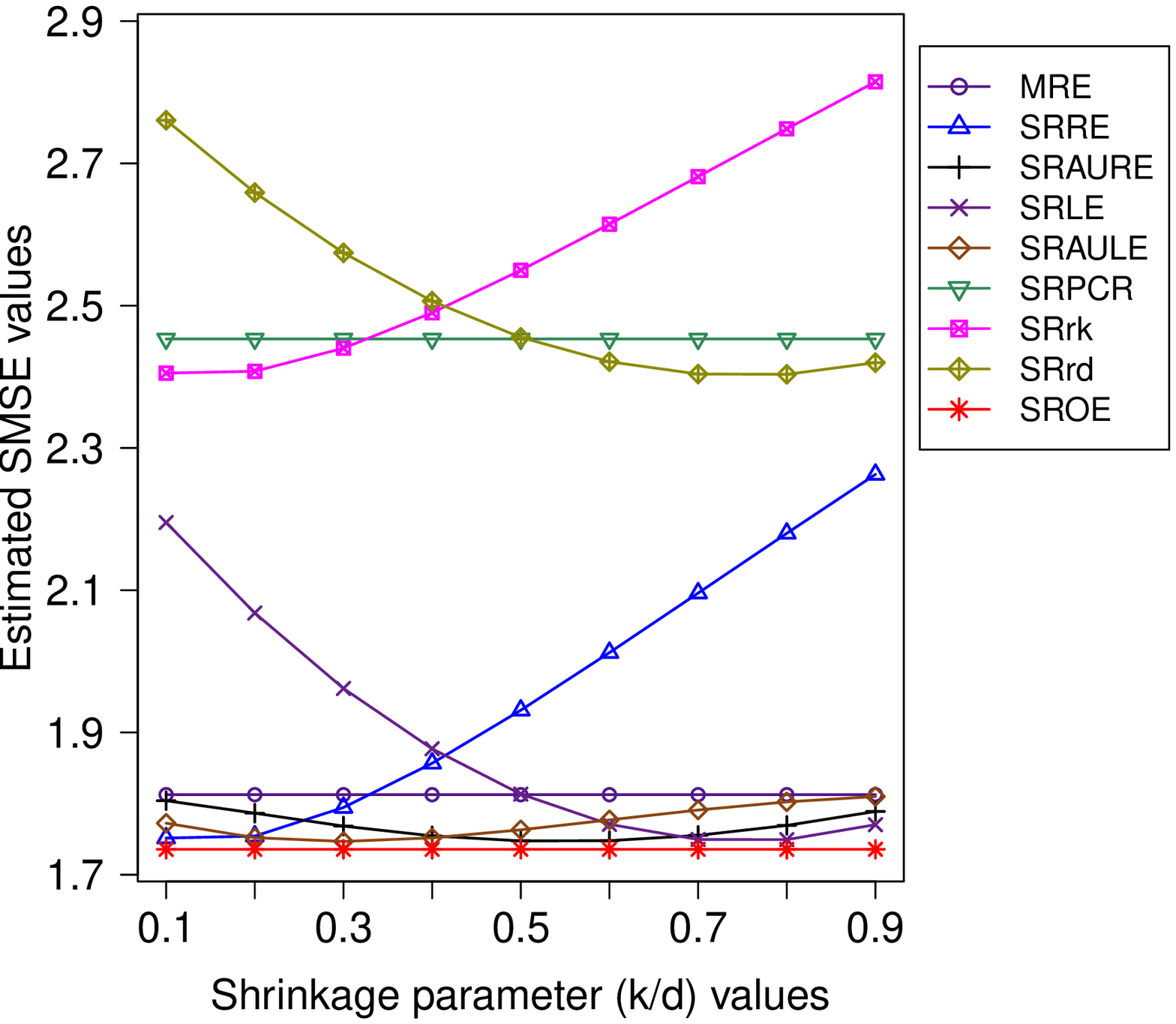}}}\hspace{2pt}
\subfloat[Under misspecified model.]{%
\resizebox*{2.78in}{!}{\includegraphics{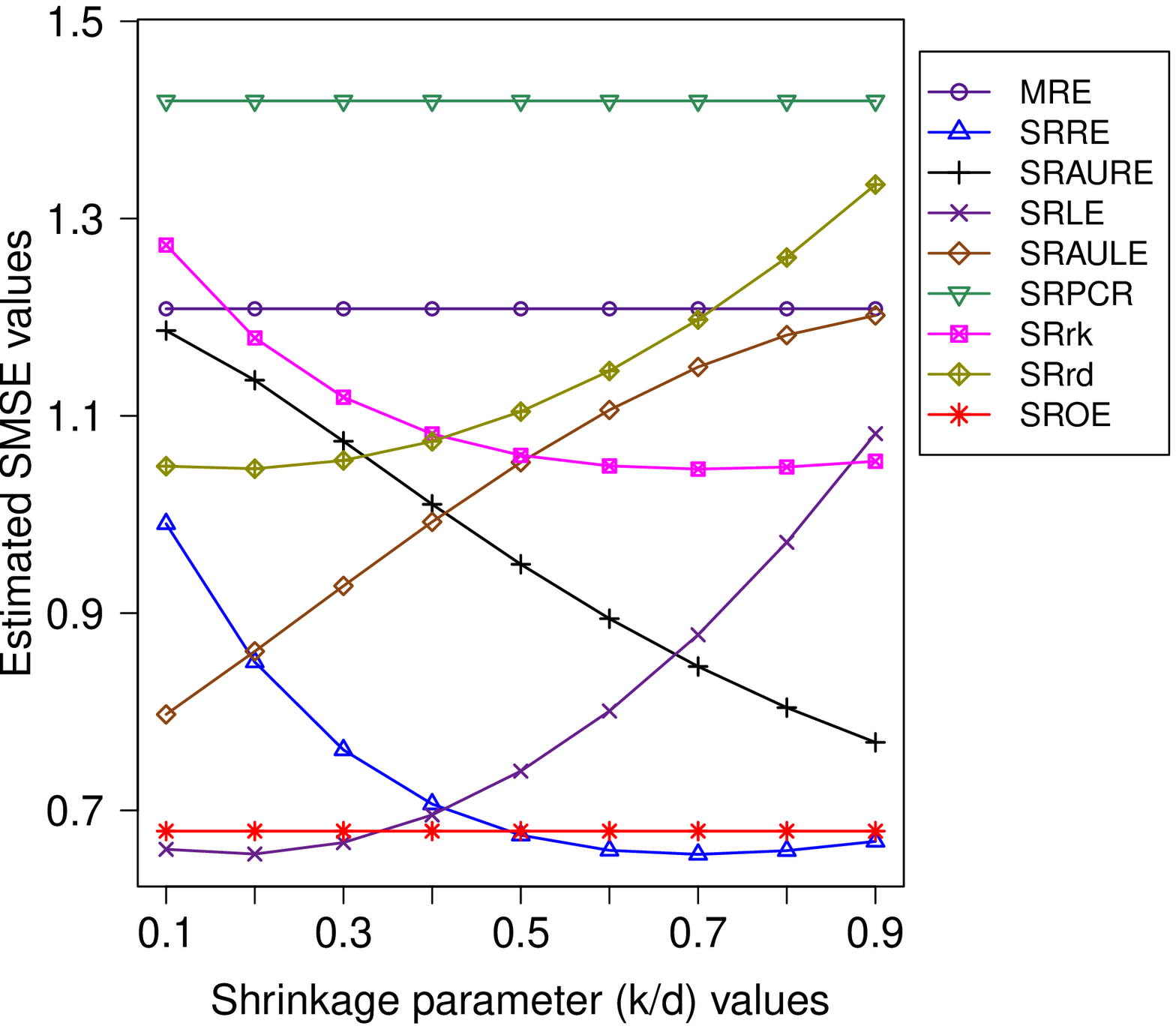}}}
\caption{SMSE values of the MRE, SRRE, SRAURE, SRLE, SRAULE, SRPCRE, SRrk, SRrd and SROE when $\alpha=0.999$.} \label{sample-figure}
\end{figure}

According to the results shown in Figures 1-3, we can observe that SIOE always outperforms the OLSE, RE, AURE, LE, AULE, PCRE, r-k class estimator, and r-d class estimator in both correctly specified model and misspecified model for all the given values of $\alpha$. According to the results shown in Figures 4-6, it is clear that SROE always outperforms the other estimators in the correctly specified model under all given values of $\alpha$. When we consider misspecified model\begin{enumerate}[(i)]
 \item SROE always outperforms over the MRE, SRRE, SRAURE, SRLE, SRAULE, SRPCRE, SRrk, and SRrd estimators when  $\alpha=0.9$, and
     \item SROE has approximately the same performance with SRRE and SRLE for a specific range of shrinkage parameter values when  $\alpha=0.99$ and $\alpha=0.999$.
     \end{enumerate}

  Further, this study shows that the performance of the estimators are different when the model is correctly specified and misspecified.
\subsection{Real-world example}
To illustrate the theoretical results, we further analyse the dataset \cite{Gr98} shown in Table C3 in Appendix C. This data set gives the total National Research and Development Expenditures as a Percent of Gross National Product by Country from 1972 to 1986. The dependent variable $y$ of this dataset is the percentage spent by the United States, and the regressor variables are $x_1$ is the percent spent by former Soviet Union, $x_2$ that spent by France, $x_3$ that spent by West Germany, and $x_4$ that spent by the Japan. The dataset has been analysed by Akdeniz and Erol \cite{Ak03}, Li and Yang \cite{Li10} and among others. They compared the SMSE of estimators based only on the correctly specified model. Therefore, this study based on the same dataset shows the consequences of the performance of estimators when the model is misspecified by excluding relevant variables.

The regressor variables mentioned above are multicollinear since the Variance Inflation Factors (VIF) of the regressor variables are 6.91, 21.58, 29.75, and 1.79.  Further, we consider $\bm R=(1,-2,-2,-2)$ and $\bm g=(0,0,0,0)$ by following Li and Yang \cite{Li10}. To study the effect of misspecification, we partition the regressor matrix as $\bm X_{1}=(x_{1},x_{2},x_{3})$ and $\bm X_{2}=(x_{4})$. The SMSE comparisions are displayed in Figures 7-8. Figure 7 shows the estimated SMSE values of the OLSE, RE, AURE, LE, AULE, PCRE, r-k class estimator, r-d class estimator and SIOE for the selected values of shrinkage parameters. Figure 8 shows the estimated SMSE values of the MRE, SRRE, SRAURE, SRLE, SRAULE, SRPCRE, SRrk, SRrd and SROE for the selected values of shrinkage parameters.
\begin{figure}
\centering
\subfloat[Under correctly specified model.]{%
\resizebox*{2.78in}{!}{\includegraphics{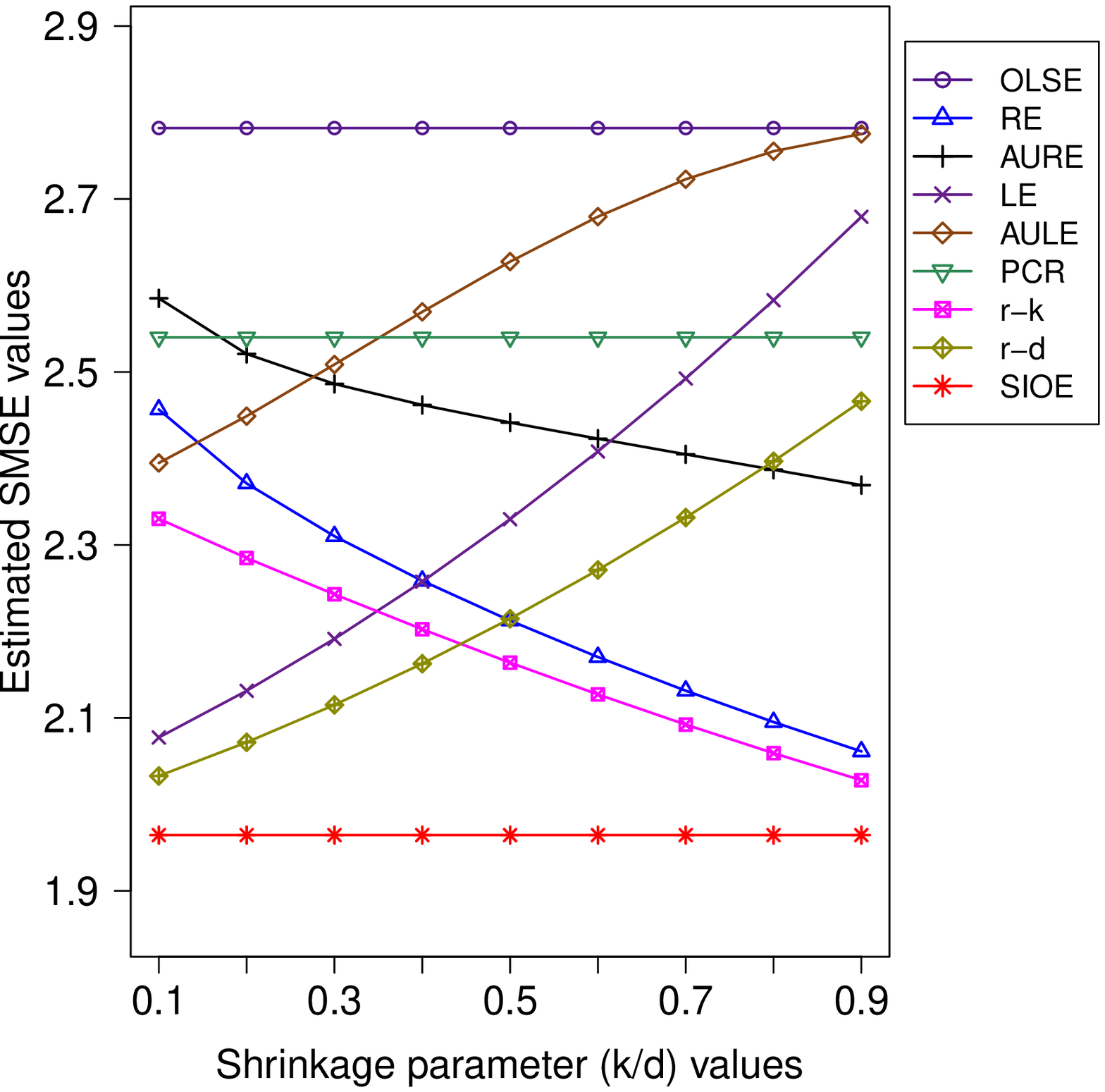}}}\hspace{2pt}
\subfloat[Under misspecified model.]{%
\resizebox*{2.78in}{!}{\includegraphics{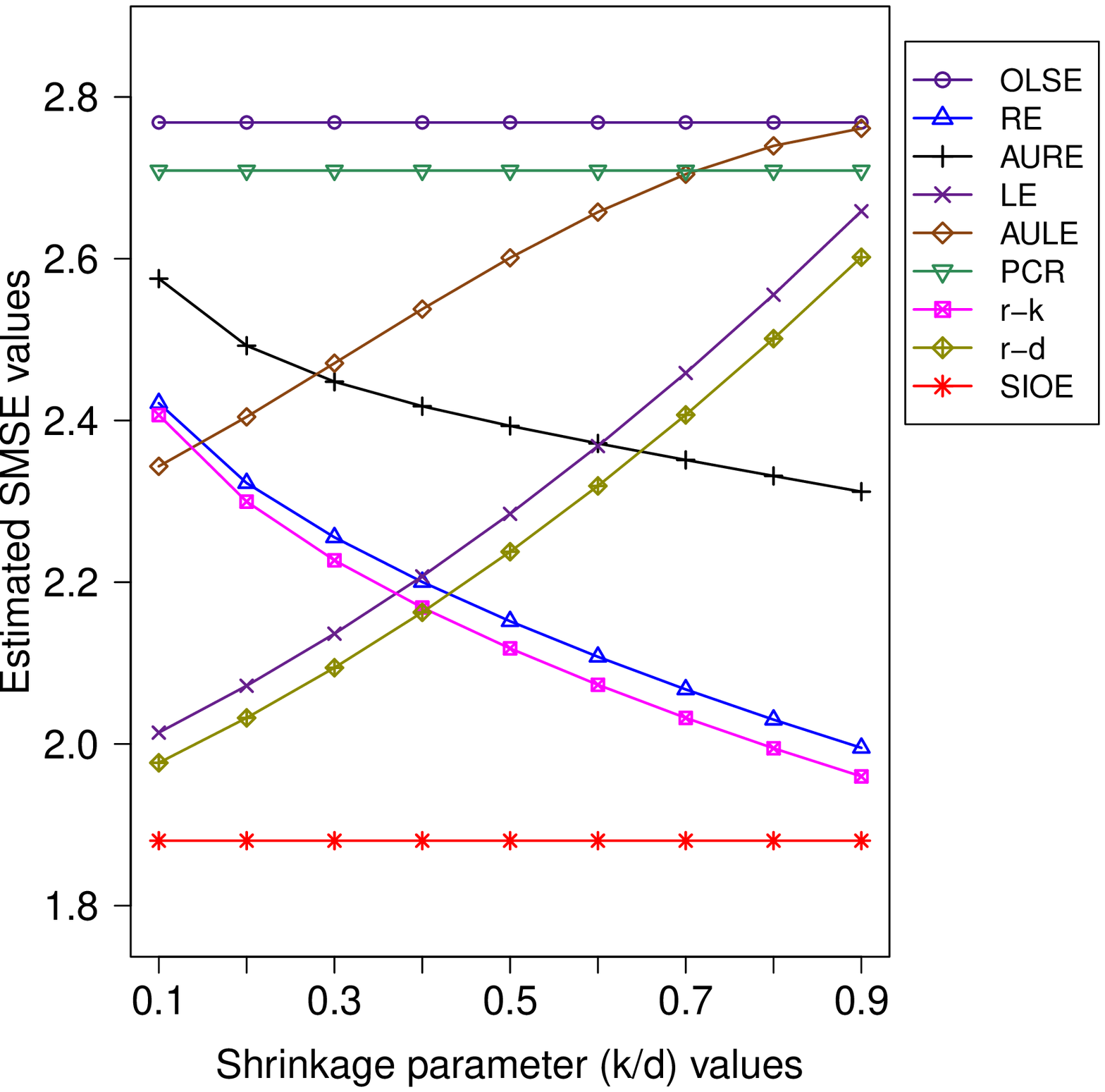}}}
\caption{SMSE values of the OLSE, RE, AURE, LE, AULE, PCRE, r-k class estimator, r-d class estimator and SIOE.} \label{f1}
\end{figure}

\begin{figure}
\centering
\subfloat[Under correctly specified model.]{%
\resizebox*{2.78in}{!}{\includegraphics{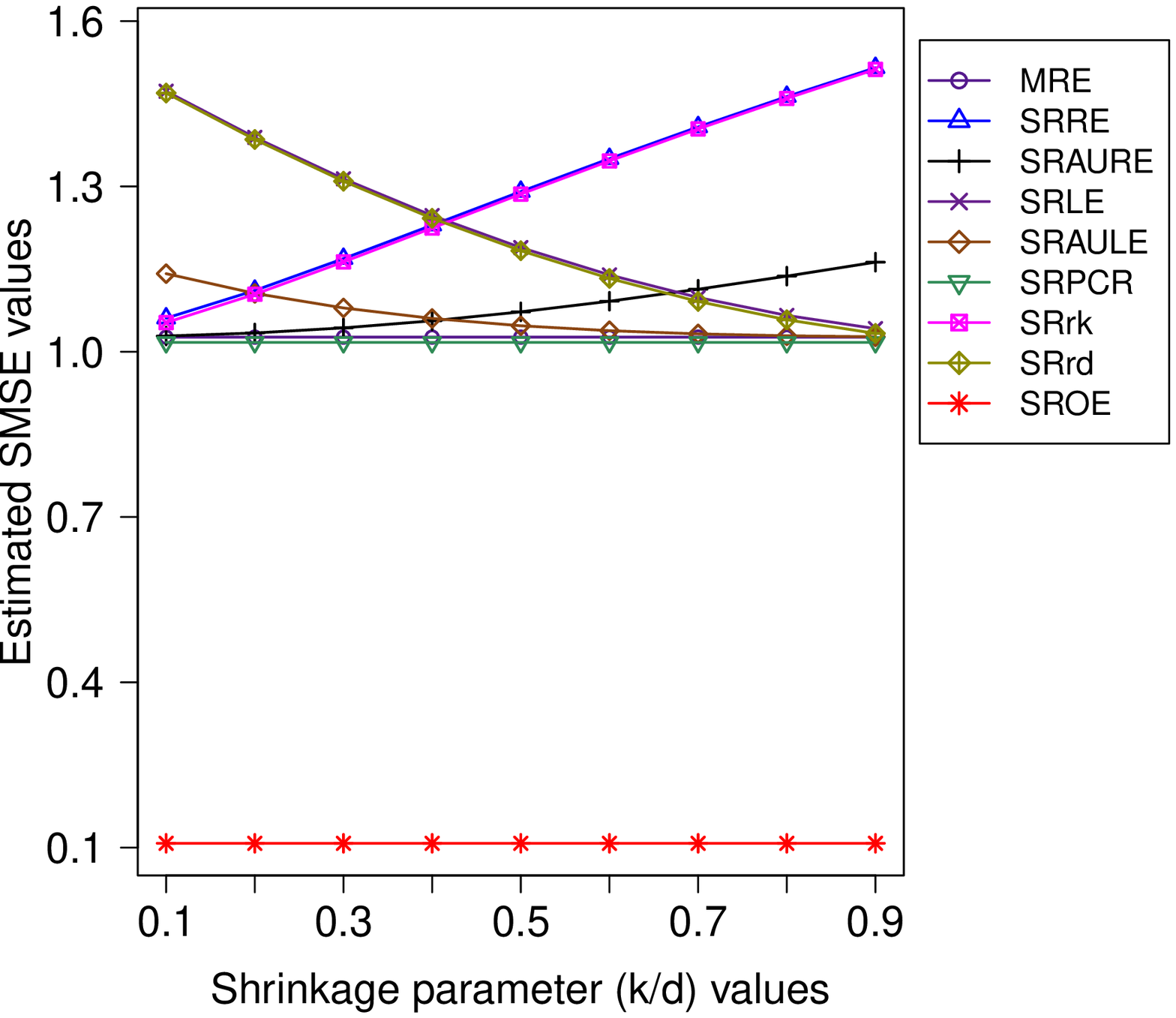}}}\hspace{2pt}
\subfloat[Under misspecified model.]{%
\resizebox*{2.78in}{!}{\includegraphics{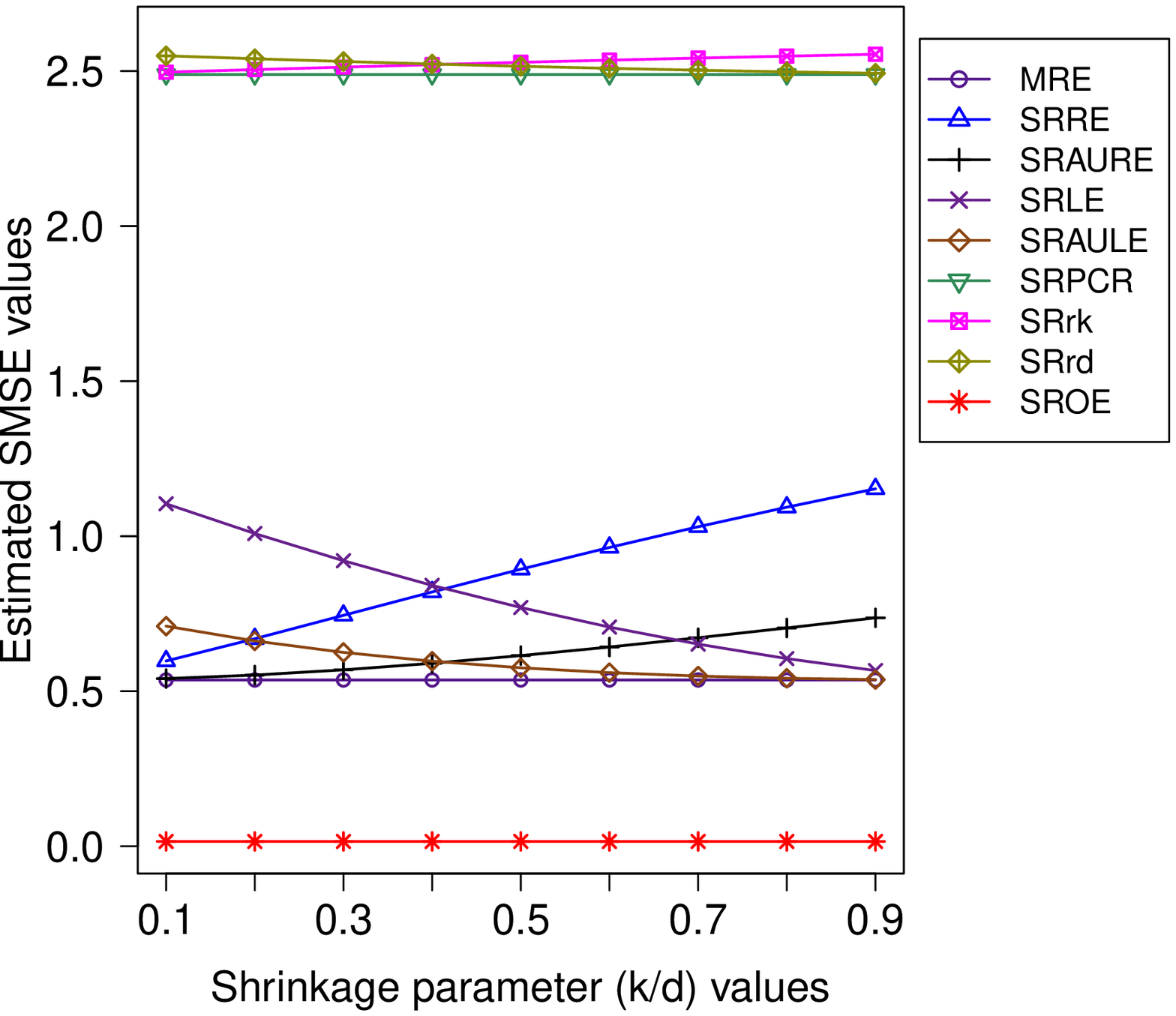}}}
\caption{SMSE values of the MRE, SRRE, SRAURE, SRLE, SRAULE, SRPCRE, SRrk, SRrd and SROE.} \label{sample-figure}
\end{figure}
 The results in Figure 7 demonstrate that SIOE always outperforms the OLSE, RE, AURE, LE, AULE, PCRE, r-k class estimator, and r-d class estimator in both correctly specified model and misspecified model. The results in Figure 8 demonstrate that SROE always outperforms the MRE, SRRE, SRAURE, SRLE, SRAULE, SRPCRE, SRrk, and SRrd estimators in both correctly specified model and misspecified model. However, it is evident that the SMSE comparisons show a significant difference when the model is correctly specified and misspecified.

\section{Conclusion}
 We showed that proposed optimal estimators SIOE and SROE are the best estimators when the model is correctly specified or misspecified regression model although multicollinearity exists among explanatory variables. Theoretically, we provided the superiority conditions for SIOE and SROE by  Theorem 3.1 and Theorem 3.2, respectively. Researchers can easily choose SIOE for the parameter estimation in either correctly specified regression model or misspecified regression model instead of considering several other biased estimators. Further, SROE can be used instead of SIOE if prior information is available on the regression coefficient $\bm \beta$. This study clearly shows the consequences of the performance of estimators when the regression model is misspecified by excluding relevant explanatory variables.

%
\section{Appendices}
\appendix
\section{Matrix Operations}
\begin{itemize}
  \item Let $M$ and $N$ be any two matrixes with proper order, then \[\frac{\partial\; tr(BMB')}{\partial B}=B(M+M')\]
\item Let $x$ is a $n \times 1$ vector, y is a $m\times 1$ vector and B is an $n\times m$ matrix, then\[\frac{\partial x'By}{\partial B}=xy'\]
\item Let $x$ be a $m \times 1$ vector, $M$ a $n \times n$ symmetric matrix and $B$ a $n \times m$ matrix, then\[\frac{\partial x'B'MBx}{\partial B}=2MBxx'\]
\end{itemize}
\section{Lemmas}
\begin{lemma}\label{section}
Let $\hat{\beta}_{1}$ and $\hat{\beta}_{2}$ be two linear estimator of $\beta$. Suppose that $D(\hat{\beta}_{1} )-D(\hat{\beta}_{2})$ is positive definite, then $MSEM(\hat{\beta}_{1})-MSEM(\hat{\beta}_{2})$ is non negative if and only if $b_{2}' \left(D(\hat{\beta}_{1} )-D(\hat{\beta}_{2})+b_{1} b_{1}'\right)^{-1} b_{2}\leq1$, where $D(\hat{\beta}_{j} )$,  $MSEM(\hat{\beta}_{j})$ and $b_{j}$ denote dispersion matrix, mean square error matrix and bias vector of $\hat{\beta}_{j}$ respectively, $j=1,2$ \cite{Tr90}.
\end{lemma}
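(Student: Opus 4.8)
The plan is to reduce the matrix inequality $MSEM(\hat{\beta}_1)-MSEM(\hat{\beta}_2)\ge 0$ to the stated scalar condition by exploiting the rank-one form of the bias terms. First I would use the decomposition $MSEM(\hat{\beta}_j)=D(\hat{\beta}_j)+b_j b_j'$ from equation~(19). Writing $D:=D(\hat{\beta}_1)-D(\hat{\beta}_2)$, this gives
\[
MSEM(\hat{\beta}_1)-MSEM(\hat{\beta}_2)=D+b_1 b_1'-b_2 b_2'.
\]
By hypothesis $D$ is positive definite, so $A:=D+b_1 b_1'$ is positive definite, being the sum of a positive definite and a positive semidefinite matrix; in particular $A^{-1}$ and the symmetric square root $A^{1/2}$ exist.

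The core step is the following rank-one fact: for a positive definite matrix $A$ and a conformable vector $c$, the matrix $A-cc'$ is nonnegative definite if and only if $c'A^{-1}c\le 1$. I would prove this via the congruence
\[
A-cc'=A^{1/2}\bigl(I-w w'\bigr)A^{1/2},\qquad w:=A^{-1/2}c,
\]
together with the observation that congruence by the nonsingular matrix $A^{1/2}$ preserves nonnegative definiteness in both directions, so $A-cc'\ge 0$ if and only if $I-w w'\ge 0$. Since $w w'$ is symmetric of rank at most one with unique nonzero eigenvalue $w'w$, the matrix $I-w w'$ is nonnegative definite exactly when $w'w\le 1$, and $w'w=c'A^{-1}c$. (Equivalently, this is the Schur-complement criterion for the bordered symmetric matrix with diagonal blocks $A$ and $1$ and off-diagonal block $c$, read off by eliminating either diagonal block.)

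Applying this with $A=D+b_1 b_1'$ and $c=b_2$ yields $MSEM(\hat{\beta}_1)-MSEM(\hat{\beta}_2)=A-b_2 b_2'\ge 0$ if and only if $b_2'(D+b_1 b_1')^{-1}b_2\le 1$, which is the asserted criterion; both directions of the ``if and only if'' fall out at once from the equivalence in the rank-one fact. I do not expect a genuine obstacle here: the only points needing care are checking that $A$ is invertible before forming $A^{-1}$ — which is exactly where positive definiteness of $D$ is used — and phrasing the congruence argument so that it delivers the full equivalence rather than merely one implication.
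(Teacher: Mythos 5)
Your proof is correct. The paper itself gives no proof of this lemma --- it is stated as a known result with a citation to Trenkler and Toutenburg --- but your argument (reduce to $A-b_2b_2'\ge 0$ with $A=D+b_1b_1'>0$, then use the congruence $A-cc'=A^{1/2}(I-ww')A^{1/2}$ to show this holds iff $c'A^{-1}c\le 1$) is precisely the standard derivation underlying that reference, and every step, including the use of positive definiteness of $D$ to guarantee invertibility of $A$ and the two-way nature of the congruence, is handled correctly.
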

\begin{lemma}\label{section}
Let $n\times n$ matrices $M > 0$, $N \geq 0$, then $M > N$ if and only if largest eigenvalue of the matrix $NM^{-1}$ is less than one \cite{ro95}.
\end{lemma}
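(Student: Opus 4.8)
The plan is to reduce the matrix inequality $M>N$ to a scalar condition on eigenvalues by passing to the symmetric positive definite square root of $M$; this is the classical fact from Rao and Toutenburg \cite{ro95} quoted in the statement, so it may simply be cited, but a short self-contained argument runs as follows.

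First I would use that, since $M>0$, it admits a unique symmetric positive definite square root $M^{1/2}$, whose inverse $M^{-1/2}$ is again symmetric and positive definite. The key reduction is that $M>N$ is equivalent to $M-N>0$, which in turn is equivalent to $M^{-1/2}(M-N)M^{-1/2}>0$: congruence by the invertible matrix $M^{-1/2}$ preserves positive definiteness, since for every $x\neq 0$ the substitution $y=M^{1/2}x\neq 0$ gives $x'(M-N)x=y'M^{-1/2}(M-N)M^{-1/2}y$. Expanding, $M^{-1/2}(M-N)M^{-1/2}=I-S$ with $S:=M^{-1/2}NM^{-1/2}$.

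Next I would analyse $S$. It is symmetric, and because $N\geq 0$ it is in fact positive semidefinite, so all of its eigenvalues lie in $[0,\infty)$. Hence $I-S>0$ holds if and only if every eigenvalue of $S$ is strictly less than $1$, and since these eigenvalues are nonnegative this is the same as requiring the \emph{largest} eigenvalue of $S$ to be strictly less than $1$. Finally I would connect $S$ with $NM^{-1}$ by conjugation: $M^{1/2}SM^{-1/2}=NM^{-1}$, so $S$ and $NM^{-1}$ are similar; hence $NM^{-1}$ has real eigenvalues coinciding with those of $S$ and, in particular, the same largest eigenvalue. Chaining these equivalences yields precisely the claim that $M>N$ if and only if the largest eigenvalue of $NM^{-1}$ is less than one.

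I do not expect a genuine obstacle, as this is routine linear algebra, but two points warrant care. First, $NM^{-1}$ need not be symmetric, so talking about its largest eigenvalue is legitimate only because similarity to the symmetric matrix $S$ forces its spectrum to be real; second, the hypothesis $N\geq 0$ (and not merely that $N$ is symmetric) is genuinely used in the step that replaces ``all eigenvalues below $1$'' by ``top eigenvalue below $1$''. Alternatively, one may bypass the argument entirely and rely on the citation \cite{ro95} already given in the statement.
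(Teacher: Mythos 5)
Your argument is correct. Note that the paper itself supplies no proof of this lemma: it is stated in Appendix~B as a known auxiliary result and simply attributed to Rao and Toutenburg \cite{ro95}, so there is no in-paper derivation to compare against. Your self-contained argument is the standard one and is sound: the congruence $M-N \mapsto M^{-1/2}(M-N)M^{-1/2}=I-S$ with $S=M^{-1/2}NM^{-1/2}$ preserves positive definiteness, $S\geq 0$ forces its spectrum into $[0,\infty)$ so that $I-S>0$ is equivalent to $\lambda_{\max}(S)<1$, and the similarity $M^{1/2}SM^{-1/2}=NM^{-1}$ transfers this to the (real) spectrum of $NM^{-1}$. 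Your two cautionary remarks --- that the eigenvalues of the non-symmetric product $NM^{-1}$ are real only by virtue of this similarity, and that $N\geq 0$ is genuinely needed to pass from ``all eigenvalues below one'' to ``largest eigenvalue below one'' --- are exactly the right points to flag.
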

\section{Tables}
\begin{table}[htb]
\tbl{Bias vector, Dispersion matrix and MSEM of the biased estimators }
{\begin{tabular}{cl}\toprule
Estimators & \multicolumn{ 1}{c}{Stochastic Properties}\\ \cmidrule{1-2}
\multirow{ 3}{*}{$\hat{\gamma}_{OLSE}$}& $Bias(\hat{\gamma}_{OLSE})=\tau A$\\
&$D(\hat{\gamma}_{OLSE})=\sigma^{2}\tau$\\
&$MSEM(\hat{\gamma}_{OLSE})=\sigma^{2}\tau+(\tau A)(\tau A)'$\\ \cmidrule{1-2}
\multirow{ 3}{*}{$\hat{\gamma}_{RE}$}&$Bias(\hat{\gamma}_{RE})=(\Lambda+kI)^{-1}(A-k\gamma)$\\
&$D(\hat{\gamma}_{RE})=\sigma^{2}(\Lambda+kI)^{-2}\Lambda$\\
&$MSEM(\hat{\gamma}_{RE})=(\Lambda+kI)^{-1}\{\sigma^{2}\Lambda+((A-k\gamma))((A-k\gamma))'\}(\Lambda+kI)^{-1}$\\ \cmidrule{1-2}
\multirow{ 3}{*}{$\hat{\gamma}_{AURE}$}&$Bias(\hat{\gamma}_{AURE})=(\Lambda+kI)^{-2}((\Lambda+2kI)A-k^{2}\gamma)$\\
&$D(\hat{\gamma}_{AURE})=\sigma^{2}(\Lambda+kI)^{-4}(\Lambda+2kI)^{2}\Lambda$\\
&$\begin{aligned}MSEM(\hat{\gamma}_{AURE})=&(\Lambda+kI)^{-2}\{\sigma^{2}(\Lambda+2kI)^{2}\Lambda\\
&+((\Lambda+2kI)A-k^{2}\gamma)((\Lambda+2kI)A-k^{2}\gamma)'\}(\Lambda+kI)^{-2}\end{aligned}$\\ \cmidrule{1-2}
\multirow{ 3}{*}{$\hat{\gamma}_{LE}$}& $Bias(\hat{\gamma}_{LE})=(\Lambda+I)^{-1}((I+d\tau)A-(1-d)\gamma)$\\
&$D(\hat{\gamma}_{LE})=\sigma^{2}(\Lambda+I)^{-2}(\Lambda+dI)^{2}\tau$\\
&$\begin{aligned}MSEM(\hat{\gamma}_{LE})=&(\Lambda+I)^{-1}\{\sigma^{2}(\Lambda+dI)^{2}\tau\\
&+((I+d\tau)A-(1-d)\gamma)((I+d\tau)A-(1-d)\gamma)'\}(\Lambda+I)^{-1}\end{aligned}$\\ \cmidrule{1-2}
\multirow{ 3}{*}{$\hat{\gamma}_{AULE}$}& $Bias(\hat{\gamma}_{AULE})=(\Lambda+I)^{-2}((\Lambda+(2-d)I)(I+d\tau)A-(1-d)^{2}\gamma)$\\
&$D(\hat{\gamma}_{AULE})=\sigma^{2}(\Lambda+I)^{-4}(\Lambda+dI)^{2}(\Lambda+(2-d)I)^{2}\tau$\\
&$\begin{aligned}MSEM(\hat{\gamma}_{AULE})=&(\Lambda+I)^{-2}\{\sigma^{2}(\Lambda+dI)^{2}(\Lambda+(2-d)I)^{2}\tau+((\Lambda+(2-d)I)(I+d\tau)A\\
&-(1-d)^{2}\gamma)((\Lambda+(2-d)I)(I+d\tau)A-(1-d)^{2}\gamma)'\}(\Lambda+I)^{-2}\end{aligned}$\\ \cmidrule{1-2}
\multirow{ 3}{*}{$\hat{\gamma}_{PCRE}$}& $Bias(\hat{\gamma}_{PCRE})=(T_{h}T_{h}'-I)\gamma+T_{h}T_{h}'\tau A$\\
&$D(\hat{\gamma}_{PCRE})=\sigma^{2}T_{h}T_{h}'\tau T_{h}'T_{h}$\\
&$MSEM(\hat{\gamma}_{PCRE})=\sigma^{2}T_{h}T_{h}'\tau T_{h}'T_{h}+\left((T_{h}T_{h}'-I)\gamma+T_{h}T_{h}'\tau A\right)\left((T_{h}T_{h}'-I)\gamma+T_{h}T_{h}'\tau A\right)'$\\ \cmidrule{1-2}
\multirow{ 3}{*}{$\hat{\gamma}_{rk}$}& $Bias(\hat{\gamma}_{rk})=(T_{h}T_{h}'(\Lambda+kI)^{-1}\Lambda-I)\gamma+T_{h}T_{h}'(\Lambda+kI)^{-1} A$\\
&$D(\hat{\gamma}_{rk})=\sigma^{2}T_{h}T_{h}'(\Lambda+kI)^{-2}\Lambda T_{h}'T_{h}$\\
&$\begin{aligned}MSEM(\hat{\gamma}_{rk})=&\sigma^{2}T_{h}T_{h}'(\Lambda+kI)^{-2}\Lambda T_{h}'T_{h}+((T_{h}T_{h}'(\Lambda+kI)^{-1}\Lambda-I)\gamma\\
&+T_{h}T_{h}'(\Lambda+kI)^{-1} A)((T_{h}T_{h}'(\Lambda+kI)^{-1}\Lambda-I)\gamma+T_{h}T_{h}'(\Lambda+kI)^{-1} A)'\end{aligned}$\\ \cmidrule{1-2}
\multirow{ 3}{*}{$\hat{\gamma}_{rd}$}& $Bias(\hat{\gamma}_{rd})=(T_{h}T_{h}'(\Lambda+I)^{-1}(\Lambda+dI)-I)\gamma+T_{h}T_{h}'(\Lambda+I)^{-1}(I+d\tau) A$\\
&$D(\hat{\gamma}_{rd})=\sigma^{2}T_{h}T_{h}'(\Lambda+I)^{-2}(\Lambda+dI)^{2}\tau T_{h}'T_{h}$\\
&$\begin{aligned}MSEM(\hat{\gamma}_{rd})=&\sigma^{2}T_{h}T_{h}'(\Lambda+I)^{-2}(\Lambda+dI)^{2}\tau T_{h}'T_{h}+((T_{h}T_{h}'(\Lambda+I)^{-1}(\Lambda+dI)-I)\gamma+T_{h}T_{h}'(\Lambda\\
&+I)^{-1}(I+d\tau) A)((T_{h}T_{h}'(\Lambda+I)^{-1}(\Lambda+dI)-I)\gamma+T_{h}T_{h}'(\Lambda+I)^{-1}(I+d\tau) A)'\end{aligned}$\\ \bottomrule
\end{tabular}}
\label{section}
\end{table}

\begin{table}
\tbl{Bias vector, Dispersion matrix and MSEM of the stochastic restricted estimators }
{\begin{tabular}{cl}\toprule
Estimators & \multicolumn{ 1}{c}{Stochastic Properties}\\ \midrule
\multirow{ 3}{*}{$\hat{\gamma}_{MRE}$}& $Bias(\hat{\gamma}_{MRE})=\tau A$\\
&$D(\hat{\gamma}_{MRE})=\sigma^{2}\tau$\\
&$MSEM(\hat{\gamma}_{MRE})=\sigma^{2}\tau+(\tau A)(\tau A)'$\\ \midrule
\multirow{ 3}{*}{$\hat{\gamma}_{SRRE}$}&$Bias(\hat{\gamma}_{SRRE})=(1+k)^{-1}(\tau A-k\gamma)$\\
&$D(\hat{\gamma}_{SRRE})=(1+k)^{-2}\sigma^{2}\tau$\\
&$MSEM(\hat{\gamma}_{SRRE})=(1+k)^{-2}\left(\sigma^{2}\tau+(\tau A-k\gamma)(\tau A-k\gamma)'\right)$\\ \midrule
\multirow{ 3}{*}{$\hat{\gamma}_{SAURRE}$}&$Bias(\hat{\gamma}_{SRAURE})=(1+k)^{-2}((1+2k)\tau A-k^{2}\gamma)$\\
&$D(\hat{\gamma}_{SRAURE})=(1+k)^{-4}(1+2k)^{2}\sigma^{2}\tau$\\
&$MSEM(\hat{\gamma}_{SRAURE})=(1+k)^{-4}\left((1+2k)^{2}\sigma^{2}\tau+((1+2k)\tau A-k^{2}\gamma)((1+2k)\tau A-k^{2}\gamma)'\right)$\\ \midrule
\multirow{ 3}{*}{$\hat{\gamma}_{SRLE}$}&$Bias(\hat{\gamma}_{SRLE})=2^{-1}((1+d)\tau A-(1-d)\gamma)$\\
&$D(\hat{\gamma}_{SRLE})=2^{-2}(1+d)^{2}\sigma^{2}\tau$\\
&$MSEM(\hat{\gamma}_{SRLE})=2^{-2}\left((1+d)^{2}\sigma^{2}\tau+((1+d)\tau A-(1-d)\gamma)((1+d)\tau A-(1-d)\gamma)'\right)$\\ \midrule
\multirow{ 3}{*}{$\hat{\gamma}_{SRAULE}$}&$Bias(\hat{\gamma}_{SRAULE})=2^{-2}((1+d)(3-d)\tau A-(1-d)^{2}\gamma)$\\
&$D(\hat{\gamma}_{SRAULE})=2^{-4}(1+d)^{2}(3-d)^{2}\sigma^{2}\tau$\\
&$\begin{aligned}MSEM(\hat{\gamma}_{SRAULE})=&2^{-4}\{(1+d)^{2}(3-d)^{2}\sigma^{2}\tau\\
&+((1+d)(3-d)\tau A-(1-d)^{2}\gamma)((1+d)(3-d)\tau A-(1-d)^{2}\gamma)'\}\end{aligned}$\\ \midrule
\multirow{ 3}{*}{$\hat{\gamma}_{SRPCRE}$}& $Bias(\hat{\gamma}_{SRPCRE})=(T_{h}T_{h}'-I)\gamma+T_{h}T_{h}'\tau A$\\
&$D(\hat{\gamma}_{SRPCRE})=\sigma^{2}T_{h}T_{h}'\tau T_{h}'T_{h}$\\
&$MSEM(\hat{\gamma}_{SRPCRE})=\sigma^{2}T_{h}T_{h}'\tau T_{h}'T_{h}+\left((T_{h}T_{h}'-I)\gamma+T_{h}T_{h}'\tau A\right)\left((T_{h}T_{h}'-I)\gamma+T_{h}T_{h}'\tau A\right)'$\\ \midrule
\multirow{ 3}{*}{$\hat{\gamma}_{SRrk}$}& $Bias(\hat{\gamma}_{SRrk})=(1+k)^{-1}((T_{h}T_{h}'-(1+k)I)\gamma+T_{h}T_{h}'\tau A)$\\
&$D(\hat{\gamma}_{SRrk})=(1+k)^{-2}\sigma^{2}T_{h}T_{h}'\tau T_{h}'T_{h}$\\
&$\begin{aligned}MSEM(\hat{\gamma}_{SRrk})=&(1+k)^{-2}\{\sigma^{2}T_{h}T_{h}'\tau T_{h}'T_{h}\\
&+\left((T_{h}T_{h}'-(1+k)I)\gamma+T_{h}T_{h}'\tau A\right)\left((T_{h}T_{h}'-(1+k)I)\gamma+T_{h}T_{h}'\tau A\right)'\}\end{aligned}$\\ \midrule
\multirow{ 3}{*}{$\hat{\gamma}_{SRrd}$}& $Bias(\hat{\gamma}_{SRrd})=2^{-1}(1+d)((T_{h}T_{h}'-2(1+d)^{-1}I)\gamma+T_{h}T_{h}'\tau A)$\\
&$D(\hat{\gamma}_{SRrd})=2^{-2}(1+d)^{2}\sigma^{2}T_{h}T_{h}'\tau T_{h}'T_{h}$\\
&$\begin{aligned}MSEM(\hat{\gamma}_{SRrd})=&2^{-2}(1+d)^{2}\{\sigma^{2}T_{h}T_{h}'\tau T_{h}'T_{h}\\
&+\left((T_{h}T_{h}'-2(1+d)^{-1}I)\gamma+T_{h}T_{h}'\tau A\right)\left((T_{h}T_{h}'-2(1+d)^{-1}I)\gamma+T_{h}T_{h}'\tau A\right)'\}\end{aligned}$\\ \bottomrule
\end{tabular}}
\label{section}
\end{table}

\begin{table}
\tbl{Total National Research and Development Expenditures—as a Percent of Gross National Product by Country: 1972-1986 }
{\begin{tabular}{cccccc}\toprule
Year &$y$ &$x_{1}$ &$x_{2}$ &$x_{3}$& $x_{4}$\\ \midrule
1972 &2.3& 1.9& 2.2& 1.9 &3.7\\
1975 &2.2& 1.8& 2.2& 2.0& 3.8\\
1979 &2.2& 1.8& 2.4& 2.1& 3.6\\
1980 &2.3& 1.8& 2.4& 2.2& 3.8\\
1981 &2.4& 2.0& 2.5& 2.3& 3.8\\
1982 &2.5& 2.1& 2.6& 2.4& 3.7\\
1983 &2.6& 2.1& 2.6& 2.6& 3.8\\
1984 &2.6& 2.2& 2.6& 2.6& 4.0\\
1985 &2.7& 2.3& 2.8& 2.8& 3.7\\
1986& 2.7& 2.3& 2.7& 2.8& 3.8\\ \bottomrule
\end{tabular}}
\label{section}
\end{table}

\end{document}